\begin{document}

\baselineskip=18pt \setcounter{page}{1}

\renewcommand{\theequation}{\thesection.\arabic{equation}}
\newtheorem{theorem}{Theorem}[section]
\newtheorem{lemma}[theorem]{Lemma}
\newtheorem{proposition}[theorem]{Proposition}
\newtheorem{corollary}[theorem]{Corollary}
\newtheorem{remark}[theorem]{Remark}
\newtheorem{fact}[theorem]{Fact}
\newtheorem{problem}[theorem]{Problem}

\newtheorem{thmconnu}{Theorem}
\renewcommand{\thethmconnu}{\Alph{thmconnu}}

\newcommand{\eqnsection}{
\renewcommand{\theequation}{\thesection.\arabic{equation}}
    \makeatletter
    \csname  @addtoreset\endcsname{equation}{section}
    \makeatother}
\eqnsection

\def\r{{\mathbb R}}
\def\e{{\mathbb E}}
\def\p{{\mathbb P}}
\def\bZ{{\mathbb Z}}
\def\S{{\mathbb S}}
\def\t{{\mathbb T}}
\def\s{{\mathcal S}}
\def\bN{{\mathbb N}}
\def\deg{{\rm b}}
\def\P{{\bf P}}
\def\ind {\mbox{\rm 1\hspace {-0.04 in}I}}
\def\E{{\bf E}}
\def\ee{\mathrm{e}}
\def\d{\, \mathrm{d}}
\newcommand{\cF}{\mathcal{F}}
\newcommand{\cG}{\mathcal{G}}
\newcommand{\cU}{\mathcal{U}}

\def\Q{{\bf Q}}
\def\q{{\mathbb Q}}
\def\tS{{\widetilde{S}}}
\def\tQ{{\widetilde{Q}}}
\def\tX{{\widetilde{X}}}
\def\ee{\mathrm{e}}
\newcommand\dd{\, \mathrm{d}}
\renewcommand\d{\mathrm{d}}
\newcommand{\cT}{\mathcal{T}}

\newcommand{\lod}{\cT_\infty}
\newcommand{\roo}{\varrho}
\newcommand{\st}{s}
\newcommand{\lhs}{\textrm{LHS}}
\newcommand{\rhs}{\textrm{RHS}}

\title{The Critical Barrier for the Survival of\\the Branching Random Walk with Absorption}
\author{Bruno Jaffuel}

\maketitle

\begin{abstract}
We study a branching random walk on $\r$ with an absorbing barrier. The position of the barrier depends on the generation. In each generation, only the individuals born below the barrier survive and reproduce.
Given a reproduction law, Biggins et al. \cite{BLSW91} determined whether a linear barrier allows the process to survive. In this paper, we refine their result: in the boundary case in which the speed of the barrier matches the speed of the minimal position of a particle in a given generation, we add a second order term $a n^{1/3}$ to the position of the barrier for the $n^\mathrm{th}$ generation and find an explicit critical value $a_c$ such that the process dies when $a<a_c$ and survives when $a>a_c$. We also obtain the rate of extinction when $a<a_c$ and a lower bound for the population when it survives.
\end{abstract}

\section{Introduction}

We study a discrete-time branching random walk on $\r$.
The population forms a well-known Galton-Watson tree $\cT$, and some extra information is added: to each individual $u\in\cT$ we attach a displacement $\xi_u\in\r$ from the position of her parent. We set the initial ancestor $\roo$ at the origin, hence the individual $u$ has position
\begin{equation*}V(u) = \sum_{\roo<v\le u}\xi_v=\sum_{i=1}^{|u|}\xi_{u_i},\end{equation*}
where $|u|$ is the generation of $u$ and $u_i$ the ancestor of $u$ in generation $i$.
We define an infinite path $u$ through $\cT$ as a sequence of individuals $u=(u_i)_{i\in \bN}$ such that
\begin{equation*}\forall i\in \bN, |u_i|=i \textrm{ and } u_i<u_{i+1}.\end{equation*}
We denote their collection by $\lod$.

Now we explain how the displacements $\xi_u, u\in \cT$ are distributed. A simple choice, with very nice properties would be to take them i.i.d. but actually everything still works in a more general setting. All individuals still reproduce independently and the same way, but we allow correlations in the number and displacements of the children of every single individual. If we write $c(u)$ for the set of children of $u$, our requirement is that the point processes $\{\xi_v,\, u\in c(u)\}$, (with $u$ running over all the potential individuals of the random tree $\cT$) are i.i.d.

We define a barrier as a function $\varphi : \bN \rightarrow \r$. In the branching random walk with absorption, the individuals $u$ such that $V(u)>\varphi(|u|)$, i.e. born above the barrier are removed: they are immediately killed and do not reproduce.

Kesten \cite{Kes78}, Derrida and Simon \cite{DS07},\cite{DS08}, Harris and Harris \cite{HH07} have studied the continuous analog of this process, the branching Brownian motion with absorption. The understanding of what happens in the continuous setting, more convenient to handle from technical point of view, greatly helps us in the discrete one. In particular, we borrow here some ideas from Kesten \cite{Kes78}.

Biggins et al. \cite{BLSW91} introduced the branching random walk with an absorbing barrier in order to answer questions about parallele simulations. Pemantle \cite{Pe09} and Gantert et al. \cite{GHS08} also studied this model.

A natural question that arises is whether the process survives. This obviously depends on the walk as well as on the barrier.
The case of the linear barriers has been solved by Biggins et al. \cite{BLSW91}.

Before stating their result, we need to introduce some notation:

We denote the intensity mesure of this point process by $\mu$, and its Laplace-Stieljes transform by $\Phi$:
\begin{equation*}\Phi(t)=\e \left[\sum_{|u|=1} \ee^{-t \xi_u}\right]=\int_\r \ee^{-tz}\mu(\d z).\end{equation*}
We assume that the expected number of children $\Phi(0)$ is finite and that negative displacements occur, i.e. that $\mu((-\infty,0))>0$.

We also define $\Psi=\log \Phi$, this is a strictly convex function that takes values in $(-\infty,+\infty]$.

We call critical the case where
\begin{equation*}
\Phi(1)=\e\left[\sum_{|u|=1} \ee^{-\xi_u}\right]=1 \textrm{ and } \Phi'(1):=\e\left[\sum_{|u|=1}\xi_u \ee^{-\xi}\right]=0.
\end{equation*}
This can also be written $\Psi(1)=0$ and $\Psi'(1)=0$.

\begin{theorem}[Biggins et al. \cite{BLSW91}]\label{BLSW91}
In the critical case, we have:
\begin{equation*}
\p\left(\exists u \in \lod, \forall i\ge 1, V(u_i)\le i \varepsilon \right) \left\{
\begin{array}{l}
=0 \textrm{ if }\varepsilon\le 0,\\ >0 \textrm{ if }\varepsilon>0.
\end{array}
 \right.
\end{equation*}
\end{theorem}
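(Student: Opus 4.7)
The two cases are of distinct nature and I treat them separately.

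\emph{Extinction when $\varepsilon\le 0$.} The natural tool is a first-moment computation via the many-to-one lemma. Set $Z_n^\varepsilon := \#\{|u|=n : V(u_i)\le i\varepsilon,\ \forall i\le n\}$. Since $\Phi(1)=1$ in the critical case, tilting $\mu$ by $\ee^{-x}$ defines a probability measure $\Q$ under which the ``spine'' walk $(S_n)$ is centered (mean $-\Psi'(1)=0$) with finite variance, and
\begin{equation*}
\e[Z_n^\varepsilon] \;=\; \E_\Q\bigl[\ee^{S_n}\, \ind_{\{S_i \le i\varepsilon,\, \forall i \le n\}}\bigr].
\end{equation*}
If $\varepsilon<0$ the event forces $\ee^{S_n}\le\ee^{n\varepsilon}$, which decays exponentially. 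If $\varepsilon=0$, the classical ballot-type bound $\Q(S_i\le 0,\,\forall i\le n;\,-S_n\in[k,k+1]) \le C(k+1)/n^{3/2}$ for centered walks, summed against $\ee^{-k}$, gives $\e[Z_n^0] = O(n^{-3/2})$. Either way $\e[Z_n^\varepsilon]\to 0$; since the event $\{\exists u\in\lod: V(u_i)\le i\varepsilon,\,\forall i\}$ is contained in $\{Z_n^\varepsilon\ge 1\}$ for every $n$, Markov's inequality closes the extinction half.

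\emph{Survival when $\varepsilon>0$.} My plan is to embed a supercritical Galton-Watson process in the barrier-constrained BRW. Fix large parameters $T\in\bN$ and $A>0$, and call a particle $u$ with $|u|\in T\bN$ \emph{good} if $V(u_j)\le j\varepsilon$ for every $j\le|u|$ and $V(u)-|u|\varepsilon\in[-2A,-A]$. Let $t^*\in(0,1)$ be the unique solution of $\Psi'(t^*)=-\varepsilon$, which exists by strict convexity of $\Psi$. Applying the many-to-one lemma to the subtree rooted at a good ancestor and setting $E_i:=S_i-i\varepsilon$, the expected number of good offspring (good descendants $T$ generations later) becomes
\begin{equation*}
\ee^{T\varepsilon}\,\E_\Q\bigl[\ee^{E_T}\, \ind_{\{E_i\le a,\,\forall i\le T;\ E_T\in[a-2A,a-A]\}}\bigr],
\end{equation*}
where $a$ denotes the distance of the ancestor below the barrier. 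The factor $\ee^{E_T}$ is $\Theta(1)$ on the event, and Cram\'er's large-deviation estimate yields a probability of order $\exp\bigl(T[\Psi(t^*)-(1-t^*)\varepsilon]\bigr)$ for the walk $E$ (of drift $-\varepsilon$) to remain below a bounded level and end near $0$. Multiplying by $\ee^{T\varepsilon}$ produces a first-moment asymptotic of order $\exp\bigl(T\cdot g(\varepsilon)\bigr)$ with $g(\varepsilon):=t^*\varepsilon+\Psi(t^*)$; an envelope-theorem computation gives $g(0)=0$ and $g'(\varepsilon)=t^*(\varepsilon)>0$, so $g(\varepsilon)>0$ whenever $\varepsilon>0$. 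Choosing $T$ large then makes the first moment exceed $1$. A standard second-moment bound on the number of good offspring, via the spine decomposition, converts this into a positive survival probability for the embedded Galton-Watson chain, and the resulting infinite good ray provides an infinite path below the barrier.

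\emph{Main obstacle.} The most delicate piece is the second-moment estimate for the number of good offspring, uniformly in the starting displacement $a\in[A,2A]$. One splits pairs of good descendants by their coalescence generation $k\in\{0,\ldots,T\}$ and must show that the corresponding joint probability is controlled by the square of the single-offspring probability. This requires a sharp local-limit refinement of Cram\'er's estimate at the coalescence point, together with careful tracking of the tilted spine law --- standard in spirit, but technically the core of the argument.
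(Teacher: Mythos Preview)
The paper does not give its own proof of this theorem: it is quoted from Biggins, Lubachevsky, Shwartz and Weiss as a known result, and the paper's contribution is the refinement in Theorem~\ref{result}. So there is no proof in the text to compare your proposal against directly. That said, the paper's main results do recover Theorem~\ref{BLSW91} as a by-product: for $\varepsilon\le 0$ the barrier $i\varepsilon$ lies below $ai^{1/3}$ for every $a>0$, so extinction under the latter barrier for some $a<a_c$ (Theorem~\ref{result}) forces extinction under the former; for $\varepsilon>0$ one has $ai^{1/3}\le i\varepsilon$ for all large $i$, so survival under $ai^{1/3}$ with $a>a_c$ yields, after an easy finite adjustment at the start, survival under $i\varepsilon$. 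The machinery behind that route is Mogul'skii's small-deviation estimate combined with a second-moment argument on the population between two barriers (Sections~\ref{section_upper_bound} and~\ref{section_survivors}), which is quite different from your large-deviations embedding.

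Your direct approach is sound and is essentially the original argument of \cite{BLSW91}. Two small remarks. First, in the $\varepsilon=0$ case you do not need the ballot estimate: on the event $\{S_i\le 0,\ \forall i\le n\}$ one has $\ee^{S_n}\le 1$, so $\e[Z_n^0]\le \p(S_i\le 0,\ \forall i\le n)\to 0$ already suffices, and this avoids invoking the finite-variance hypothesis, which is not part of the critical-case assumptions as stated. Second, in the survival half you assume that $\Psi'(t^*)=-\varepsilon$ has a solution $t^*\in(0,1)$; this need not hold for large $\varepsilon$, but since the survival event is monotone in $\varepsilon$ it is enough to treat small $\varepsilon>0$, where such a $t^*$ exists by $\Psi'(1)=0$ and strict convexity. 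Your identification of the uniform second-moment bound as the crux is accurate; that is indeed where the work lies.
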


The aim of this article is to refine this result by replacing the linear barrier $i \mapsto i \varepsilon$ with a more general barrier $i \mapsto \varphi(i)$.

Given a barrier $\varphi$ we do not know in general whether $\p\left(\exists u \in \lod, \forall i\ge 1, V(u_i)\le \varphi(i) \right)=0$ or not.
Theorem \ref{BLSW91} leads us to focus on barriers such that $\frac{\varphi(i)}{i}\rightarrow 0$. Here is the main result we will prove in this paper.

\begin{theorem}\label{result}
We assume:
\begin{equation*}\sigma^2:=\Phi''(1)=\e \left[\sum_{|u|=1} \xi_u^2\ee^{\xi_u}\right]<+\infty.\end{equation*}

Let $a_c=\frac{3}{2}\left(3 \pi^2 \sigma^2\right)^{1/3}$. Then we have:
\[
\p\left(\exists u \in \lod, \forall i\ge 1, V(u_i)\le a i^{1/3} \right) \left\{
\begin{array}{l}
=0 \textrm{ if }a < a_c,\\ >0 \textrm{ if }a>a_c.
\end{array}
 \right.
\]
\end{theorem}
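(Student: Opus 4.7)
The plan is to apply the standard many-to-one reduction to a random walk problem, then exploit a Mogulskii-type small-deviation estimate at the $n^{1/3}$ scale. \emph{Setup}: perform the exponential tilt $\tilde\mu(\d z)=e^{-z}\mu(\d z)$; by $\Psi(1)=\Psi'(1)=0$, $\tilde\mu$ is a probability measure with mean $0$ and variance $\sigma^2$. Let $(S_j)_{j\ge 0}$ be a random walk with step law $\tilde\mu$ under $\tilde\p$, expectation $\tilde\e$. Setting $N_n=\#\{|u|=n:V(u_j)\le aj^{1/3},\,\forall\,1\le j\le n\}$, the many-to-one formula gives
\[
\e[N_n]=\tilde\e\bigl[e^{S_n}\,\mathbf{1}\{S_j\le aj^{1/3},\ \forall\,j\le n\}\bigr].
\]

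\textbf{Extinction} $(a<a_c)$: since $\{N_n\ge 1\}$ is non-increasing in $n$ and $\p(N_n\ge 1)\le\e[N_n]$, it suffices to prove $\e[N_n]\to 0$. Writing $S_n=an^{1/3}-R$ on the event, $e^{S_n}=e^{an^{1/3}}e^{-R}$, so only $R=O(1)$ contributes. The central random-walk input is a Mogulskii-type estimate in a parabolic strip,
\[
\tilde\p\bigl(S_j\in[-Lj^{1/3},\,aj^{1/3}],\,\forall j\le n\bigr)\;\asymp\;\exp\!\Bigl(-\frac{3\pi^2\sigma^2\,n^{1/3}}{2(a+L)^2}\Bigr),
\]
obtained by time-slicing, rescaling, and integrating the first Dirichlet eigenvalue of $\tfrac{\sigma^2}{2}\partial_x^2$ on intervals of width $(a+L)j^{1/3}$. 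Inserting an auxiliary lower barrier $-Lj^{1/3}$ and handling excursions below it via a Kesten-style stopping-time/iteration argument yields $\e[N_n]\le C\,n^{-\alpha}\exp\bigl((a-\kappa(a))n^{1/3}\bigr)$, with $\kappa(a)>a$ precisely when $a<a_c$; this proves extinction.

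\textbf{Survival} $(a>a_c)$: apply a truncated second-moment (Paley-Zygmund) argument. Let $N'_n$ count those $u$ at generation $n$ whose lineage lies in a parabolic tube $\{aj^{1/3}-wj^{1/3}\le V(u_j)\le aj^{1/3}\}$ for a tube width $w$ tuned to $a$, under a truncated reproduction law with a uniformly bounded number of children (admissible because $\Phi(0)<+\infty$). The matching lower Mogulskii bound, combined with the principal Dirichlet eigenfunction at the upper endpoint of the strip, gives $\e[N'_n]\ge c\exp\bigl((a-\tfrac{3\pi^2\sigma^2}{2w^2})n^{1/3}\bigr)$, which explodes for $a>a_c$ and the right $w$. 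The standard decomposition at the most recent common ancestor bounds $\e[(N'_n)^2]$ by a sum over branching times $k$ of products of Mogulskii-type estimates for the two independent sub-lineages; truncation controls the branching factor and yields $\e[(N'_n)^2]\le C\,\e[N'_n]^2$. Paley-Zygmund gives $\p(N'_n\ge 1)\ge c>0$ for every $n$, and monotonicity of $\{N'_n\ge 1\}$ in $n$ provides an infinite surviving line of descent.

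\textbf{Main obstacle}: the hardest step is quantifying the Mogulskii-type strip estimate sharply enough---with the correct exponential rate $\tfrac{3\pi^2\sigma^2}{2(a+L)^2}$ \emph{and} the correct polynomial/eigenfunction prefactor---uniformly in the parameters, so that the interplay between the weight $e^{an^{1/3}}$ and the cost of confinement in the parabolic strip pinpoints the explicit critical value $a_c=\tfrac{3}{2}(3\pi^2\sigma^2)^{1/3}$. Equally delicate is the choice of tube width $w$ in the second-moment step: it must simultaneously make the first moment exponentially large and keep the second moment comparable to the square of the first. Controlling the deep-excursion contributions in the extinction part (paths that dip far below $-Lj^{1/3}$) requires an iterative bound tight enough not to spoil the leading order.
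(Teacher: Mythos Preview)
Your plan captures the broad architecture (many-to-one plus Mogul'skii for extinction, second moment plus Paley--Zygmund for survival), but both halves contain a real gap that the paper has to work to close.

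\textbf{Extinction.} With the self-similar lower barrier $-Lj^{1/3}$ (i.e.\ tube width $g(t)=(a+L)t^{1/3}$ in the paper's notation), the decomposition into ``stay in the tube'' plus ``first exit at time $j$'' gives the rate
\[
s_2=\min_{0\le\alpha\le1}\Bigl\{-a\alpha^{1/3}+g(\alpha)+\tfrac{\pi^2\sigma^2}{2}\int_0^\alpha g(t)^{-2}\,\d t\Bigr\}
=\min_{0\le\alpha\le1}\alpha^{1/3}\Bigl(L+\tfrac{3\pi^2\sigma^2}{2(a+L)^2}\Bigr)=0,
\]
attained at $\alpha=0$. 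In other words, the contribution of paths that dip below $-Lj^{1/3}$ very early is of order~$1$, and summing your $R_j$'s gives a bound that is bounded in~$n$ but does not tend to~$0$. Your ``Kesten-style iteration'' is supposed to rescue this, but you never specify it, and it is not clear how an iteration over successive down-crossings would manufacture a rate $\kappa(a)$ with $\kappa(a)>a$ exactly when $a<a_c$. The paper's resolution is different: the lower barrier is taken to depend on the horizon, $a i^{1/3}-n^{1/3}g(i/n)$, with $g(0)>0$ (so the tube has width of order $n^{1/3}$ near time $0$), and $g$ is chosen to make the bracketed quantity in $s_2$ constant in~$\alpha$. This forces $g$ to solve $g'(t)=\tfrac{a}{3}t^{-2/3}-\tfrac{\pi^2\sigma^2}{2g(t)^2}$, and the whole identification of $a_c$ comes from the dichotomy for this ODE: for $a<a_c$ the solution hits $0$ in finite time (giving $s_2>0$), for $a\ge a_c$ it is global. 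None of this is visible from a fixed-shape tube.

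\textbf{Survival.} The direct second-moment route you sketch is exactly what the paper says does \emph{not} go through. Mogul'skii's theorem gives only $\log\p(E_n)=-(\mathrm{rate}+o(1))\,n^{1/3}$, so one gets $\e[N'_n]=e^{(c_1+o(1))n^{1/3}}$ and $\e[(N'_n)^2]=e^{(2c_1+o(1))n^{1/3}}$ with \emph{different} $o(1)$'s; the Paley--Zygmund ratio is then $e^{o(n^{1/3})}$, which may go to $0$. Your claim $\e[(N'_n)^2]\le C\,(\e[N'_n])^2$ with a genuine constant $C$ would require a prefactor-sharp small-deviation estimate that you do not supply (and that is not in Mogul'skii). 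The paper circumvents this by a geometric blocking scheme: work on time intervals $[e^{\lambda k},e^{\lambda(k+1)}]$, place each seed pessimistically at the top of the barrier, apply Paley--Zygmund on each block to get $A_k\ge e^{-o(\ell_k^{1/3})}$, and then use the exponential growth of the first moment on the \emph{previous} block to ensure $v_{k-1}A_k\to\infty$; the product $\prod_k(1-e^{-v_{k-1}A_k})$ then stays positive. This blocking, not a sharper Mogul'skii, is how the $o(n^{1/3})$ slack is absorbed. Incidentally, the condition that emerges is $a>b+\tfrac{3\pi^2\sigma^2}{2b^2}$ for some $b$ (the minimum of the right side over $b>0$ is exactly $a_c$), not merely $a>\tfrac{3\pi^2\sigma^2}{2w^2}$ as your first-moment display suggests; the extra $b$ comes from the worst-case starting height in the second-moment decomposition.
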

Unfortunately, we have not been able to conclude in the case $a=a_c$, nor give a necessary and sufficient condition on a general barrier for a line of descent to survive below it.

While proving Theorem \ref{result}, we actually obtain stronger results. The two following propositions together imply the theorem.

\begin{proposition}[surviving population]\label{survivors_number}
If $a>a_c$, then the equation $a=b+\frac{3\pi^2\sigma^2}{2b^2}$ has two solutions in $b$, let $b_a$ be the one such that $b_a>\frac{2a_c}{3}$.
For any $\varepsilon>0$, for any $N\in \bN$ large enough, we have with positive probability :
\begin{equation*}
\forall k\ge 1, \# \{u \in \cT_{N^k}: \forall i\le N^k, (a-b_a) i^{1/3} \le V(u_i)\le a i^{1/3} \}\ge \exp\left({N^{k/3} (b_a-\varepsilon)}\right).
\end{equation*}\end{proposition}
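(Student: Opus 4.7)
My plan is to combine three ingredients: (i) a many-to-one computation of the first moment of the count inside the strip, (ii) a truncated Paley--Zygmund argument turning the first moment into an a.s.\ lower bound with positive probability on one scale, and (iii) a geometric bootstrap across the scales $N^k$ using the branching property.

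\textbf{Step 1: First moment.} In the critical case ($\Psi(1)=\Psi'(1)=0$), the many-to-one lemma recasts
\[
\e\bigl[\#\{u\in\cT_n:(a-b_a)i^{1/3}\le V(u_i)\le ai^{1/3}\ \forall i\le n\}\bigr]=\e\bigl[\ee^{\tS_n}\ind_{\{(a-b_a)i^{1/3}\le\tS_i\le ai^{1/3}\ \forall i\le n\}}\bigr],
\]
where $\tS$ is a centered random walk with step variance $\sigma^2$. The associated Dirichlet spectral problem on the slowly varying tube of width $b_ai^{1/3}$ has lowest eigenvalue $\pi^2\sigma^2/(2b_a^2i^{2/3})$, so the probability that $\tS$ stays inside decays like $\exp(-\tfrac{3\pi^2\sigma^2}{2b_a^2}n^{1/3})$. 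Combined with the maximal weight $\ee^{\tS_n}\le\ee^{an^{1/3}}$ (attained near the top of the tube) and the defining identity $a=b_a+\tfrac{3\pi^2\sigma^2}{2b_a^2}$, this gives a first moment of order $\exp(b_an^{1/3})$.

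\textbf{Step 2: Truncated second moment.} A direct second moment fails because particles near $ai^{1/3}$ carry a huge $\ee^V$ weight in many-to-two. Following a spine/truncation strategy, I would restrict to a sub-family $A_n$ of individuals whose ancestors lie below $ai^{1/3}-L(i)$ for a slowly growing $L$, chosen so that $\e|A_n|\ge\exp(b_an^{1/3}(1-\varepsilon/4))$ while the ``near the top'' contributions are killed. Decomposing $\e[|A_n|^2]$ by the generation $k$ of the most recent common ancestor and applying many-to-two, each term becomes a single-spine barrier integral up to time $k$ times a squared conditional first moment after time $k$. With $L$ in place, these contributions are dominated by $C(\e|A_n|)^2$ up to polynomial factors and the sum over $k$ converges. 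Paley--Zygmund then delivers a constant $c_0>0$ such that $\p(|A_n|\ge\tfrac12\e|A_n|)\ge c_0$ for $n$ large.

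\textbf{Step 3: Bootstrap over scales $N^k$.} Applied at $n=N$, Step 2 gives, with probability at least $c_0$, a family of at least $\exp(b_aN^{1/3}(1-\varepsilon/2))$ good particles in the tube at generation $N$. Inductively, assume at generation $N^k$ there are $Y_k\ge\exp(N^{k/3}(b_a-\varepsilon))$ good particles, typically sitting close to the upper boundary (the regime that dominates the first moment). Each subtree rooted at such a particle sees, relative to its own root, a slowly varying tube of essentially the same shape over the next $N^{k+1}-N^k$ generations, so Steps 1--2 apply locally: each subtree independently produces at least $\exp\!\bigl(b_a[(N^{k+1})^{1/3}-(N^k)^{1/3}](1-\varepsilon/2)\bigr)$ good descendants at generation $N^{k+1}$ with probability at least $c_0$. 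A Chernoff bound on the $\mathrm{Bin}(Y_k,c_0)$ number of successful subtrees yields $Y_{k+1}\ge\exp(N^{(k+1)/3}(b_a-\varepsilon))$ outside an event of probability $\exp(-cY_k)$. These failure probabilities are summable in $k$, so a final union bound leaves a strictly positive overall probability that the inductive bound holds for every $k\ge 1$.

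\textbf{Main obstacle.} The delicate step is the truncation in Step 2: $L$ must simultaneously be mild enough not to deplete the first moment beyond $\exp(\varepsilon n^{1/3})$ and severe enough to tame the high-weight contributions to the many-to-two sum, especially from late common ancestors near the upper boundary. Calibrating $L$ and tracking the geometry of the conditioned spine inside the slowly varying tube is where the main technical effort is concentrated.
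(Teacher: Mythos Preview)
Your overall architecture---first and second moments via many-to-one/many-to-two, Paley--Zygmund, then a geometric bootstrap across scales---matches the paper's. The genuine gap is in how Step~2 is supposed to feed into Step~3.

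At the \emph{first} scale (root to generation $N$) your Step~2 is fine, and in fact no truncation is needed: the tube has width $0$ at time $0$, and a direct many-to-two calculation shows each term $B_j/(\E|A_N|)^2$ is $O(1)$ on the exponential scale, so $\E[|A_N|^2]/(\E|A_N|)^2$ is at most polynomial. (As an aside, your proposed truncation below $ai^{1/3}-L(i)$ targets the wrong boundary: after many-to-one, the weight of a split at position $x$ carries a factor $e^{-x}$, so the dominant second-moment contribution comes from common ancestors near the \emph{bottom} of the tube, not the top.)

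The problem is that this does \emph{not} transfer to the subtrees in Step~3. A subtree rooted at a particle at generation $m=N^k$ sees a tube that already has width $b\,m^{1/3}$ at its starting time. For such a subtree (started, as you say, near the upper boundary) the many-to-two computation gives
\[
\frac{\E[Z_{\mathrm{sub}}^2]}{(\E[Z_{\mathrm{sub}}])^2}\;\asymp\;\exp\!\bigl(b\,m^{1/3}\bigr),
\]
because $\sup_x h_j(x)$ (attained when the common ancestor sits at the bottom of the tube) exceeds $\E[Z_{\mathrm{sub}}]$ (computed from the top) by exactly this factor. No truncation of order $o(m^{1/3})$ removes this, and one of order $\Theta(m^{1/3})$ kills the first moment. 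Hence the ``constant $c_0$'' you invoke in Step~3 is really $c_0^{(k)}\asymp\exp(-b(N^k)^{1/3})$, and with only $Y_k\ge\exp((b_a-\varepsilon)(N^k)^{1/3})$ your Chernoff step collapses: $Y_k\,c_0^{(k)}\to 0$.

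The paper does not try to make $c_0$ constant. It accepts that Paley--Zygmund only yields $A_k\asymp\exp(-b\,e^{\lambda k/3})$, takes the strip width $b$ \emph{strictly} below $b_a$ so that $a-b-\tfrac{3\pi^2\sigma^2}{2b^2}>0$, and shows that the number $v_{k-1}=\theta\,\E[Z_{k-1}]$ of particles inherited from the previous scale overcompensates: one gets $v_{k-1}A_k\to\infty$ precisely when $\lambda=\log N$ is large enough. The computation of the function $G_\lambda(\alpha)$ in Section~4 is exactly this balance, and it is where the hypothesis $a>a_c$ (equivalently $b_a>\tfrac{2a_c}{3}$) is actually used. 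Your sketch misses this cancellation, which is the heart of the proof.
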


\begin{proposition}[rate of extinction]\label{extinction_rate}
If $a<a_c$, then there exists some constant $c>0$ such that
\begin{equation*}\frac{1}{n^{1/3}}\log \p\left(\exists u \in \cT_n, \forall i\le n, V(u_i)\le a i^{1/3} \right)\rightarrow -c.\end{equation*}
\end{proposition}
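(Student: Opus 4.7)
Both bounds will rely on the many-to-one lemma. Let $(S_i)_{i\geq 0}$ denote the associated random walk with i.i.d.\ steps of law $e^{-x}\mu(\d x)$, which under the critical hypothesis $\Psi(1)=\Psi'(1)=0$ is centered with variance $\sigma^2$. The first-moment identity
\[
\e\!\left[\#\{u\in\cT_n: V(u_i)\leq a i^{1/3}\,\forall i\leq n\}\right] \;=\; \e\!\left[e^{S_n}\,\ind_{\{S_i\leq a i^{1/3}\,\forall i\leq n\}}\right]
\]
already yields an upper bound on the survival probability $p_n := \p(\exists u\in\cT_n: V(u_i)\leq a i^{1/3}\,\forall i\leq n)$, and it will also be central to the lower bound via a second-moment method.

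For the upper bound ($\limsup n^{-1/3}\log p_n < 0$), I would show that the weighted expectation above is at most $e^{-c n^{1/3}}$ for some $c > 0$. The interaction between the $e^{S_n}$ weighting and the sublinear barrier $a i^{1/3}$ gives rise to a Sturm--Liouville / Airy-type eigenvalue problem: on the event that $S$ stays below the barrier, the quasi-stationary distribution of $S_n$ is biased toward the upper barrier by the factor $e^{S_n}$, and the decay rate is governed by the first eigenvalue of the associated operator. Concretely, I would introduce an auxiliary lower barrier $-b i^{1/3}$ with $b>0$ to be optimized, bound the contribution of paths that dip below it via the $e^{S_n}$ weight (which is at most $e^{-b n^{1/3}}$ there, together with a strong Markov argument at the first crossing), and control paths confined to the slowly widening strip $[-b i^{1/3}, a i^{1/3}]$ by a time-dependent Dirichlet eigenvalue bound (integrating the instantaneous rate $\pi^2\sigma^2/(2(a+b)^2 i^{2/3})$ over $[1,n]$ to give $\exp(-\tfrac{3\pi^2\sigma^2}{2(a+b)^2} n^{1/3})$). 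A more refined decomposition by the endpoint $S_n$, so as not to waste the $e^{S_n}$ factor on paths far below the barrier, is likely needed to reach the sharp threshold $a_c$.

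For the lower bound ($\liminf n^{-1/3}\log p_n > -\infty$), I would reverse-engineer the extremizer of the variational problem above, following the blueprint of Proposition \ref{survivors_number} but requiring only one surviving individual rather than exponentially many. The strategy is to force the BRW to concentrate along an ``optimal'' trajectory of the form $-b^* i^{1/3}$ (for the same optimizing $b^*$ as in the upper bound) and use a truncated second-moment argument on the subpopulation confined to a narrow tube around that curve. Matching of the constants in the upper and lower bounds---needed to obtain the exact limit $-c$ rather than merely two-sided inequalities---may be established either by the sharpness of the eigenvalue estimate together with the second-moment construction, or by a super-multiplicative inequality on $p_n$ combined with Fekete's lemma (after absorbing the slight non-translation-invariance of the barrier function perturbatively).

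The principal obstacle will be performing the sharp Sturm--Liouville analysis on both sides. In the Brownian approximation this reduces to a classical Airy eigenvalue computation, but transferring it to the discrete random walk uniformly in a slowly widening strip requires either quantitative invariance-principle bounds or a direct spectral argument for the discrete operator, with careful handling of the boundary-layer fluctuations near the barrier. A secondary technical difficulty is the asymmetric quasi-stationary profile induced by the $e^{S_n}$ weighting (biasing the walk toward the upper barrier), which must be treated without losing constants in the exponent, particularly in the range of $a$ close to $a_c$.
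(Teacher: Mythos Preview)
Your overall architecture---many-to-one, a first-crossing decomposition with respect to an auxiliary lower barrier, Mogul'skii/eigenvalue control of confined paths, and a second-moment lower bound on a tube---is exactly the paper's. The gap is in the \emph{shape} of the auxiliary barrier and how it drives the matching of constants. A lower barrier of the form $-b\,i^{1/3}$ does not suffice: for paths that cross it at an early time $j\ll n$ you only get $e^{S_j}\le e^{-b j^{1/3}}$, which carries no $n^{1/3}$ decay, so the sum $\sum_j R_j$ is not $e^{-c n^{1/3}}$; and even the confined term $R_\infty$ only decays when $a$ is below a threshold strictly smaller than $a_c$. The paper instead takes an $n$-dependent width $b_{i,n}=n^{1/3}g(i/n)$ with $g(0)>0$ and $g(1)=0$, and chooses $g$ as the solution of the integral/differential equation
\[
g'(t)=\tfrac{a}{3}t^{-2/3}-\tfrac{\pi^2\sigma^2}{2g(t)^2},
\]
precisely so that the rate functional $-a\alpha^{1/3}+g(\alpha)+\tfrac{\pi^2\sigma^2}{2}\int_0^\alpha g^{-2}$ is \emph{constant} in the crossing time $\alpha$. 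That constancy is the whole point: it makes the $\min_\alpha$ appearing in the upper bound equal to the $\max_\alpha$ appearing in the second-moment lower bound, and hence identifies the limit $c$. Your hoped-for alternatives---refining by the endpoint $S_n$, or a super-multiplicativity/Fekete argument---do not appear in the paper and would not obviously close this gap (the concavity of $i\mapsto i^{1/3}$ makes any clean super-multiplicativity for $p_n$ awkward). The missing idea is this ODE-based barrier and the resulting $\min=\max$ mechanism.
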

The constant $c$, which depends on $a$, is determined in Section \ref{section_extinction_rate}.

\subsection{About general barriers}

Let $\varphi : \bN \to \r$ be a barrier. We define $a^+=\limsup_{n\to \infty} \frac{\varphi(n)}{n^{1/3}}$ and $a^-=\liminf_{n\to \infty} \frac{\varphi(n)}{n^{1/3}}$.

We deduce from theorem \ref{result} that there is extinction when $a^+<a_c$ and survival when $a^->a_c$.
Making some modifications to the computations of Section \ref{section_upper_bound}, we can prove the following result :
\begin{theorem}\label{result_bis}
Assume $a^+\ge a_c$. The equation $a^+=b+\frac{3\pi^2\sigma^2}{2 b^2}$ admits a unique solution $b=\frac{2a_c}{3}$ if $a^+=a_c$, and two solutions if $a>a_c$. Let $b_{a^+}\ge \frac{2a_c}{3}$ be the larger solution.

If $a^-<\frac{3\pi^2\sigma^2}{2 b_{a^+}^2}$, then there is extinction.
\end{theorem}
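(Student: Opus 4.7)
The plan is a first-moment argument along a subsequence of times at which $\varphi$ attains its liminf slope, adapting the computations of Section~\ref{section_upper_bound}.

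Fix $\varepsilon>0$. By the definitions of $a^\pm$, choose $(n_k)\uparrow\infty$ with $\varphi(n_k)\le(a^-+\varepsilon)n_k^{1/3}$, and $M_\varepsilon$ such that $\varphi(i)\le(a^++\varepsilon)i^{1/3}$ for every $i\ge M_\varepsilon$. Set $Z_n=\#\{u\in\cT_n:V(u_i)\le\varphi(i),\forall i\le n\}$; a line of descent surviving forever forces $Z_{n_k}\ge1$ for all $k$, so $\p(\text{survival})\le\e[Z_{n_k}]$. The many-to-one identity, using $\Phi(1)=1$, gives
\begin{equation*}
\e[Z_n]=\widetilde{\e}\bigl[\ee^{S_n}\ind_{\{S_i\le\varphi(i),\,\forall i\le n\}}\bigr],
\end{equation*}
where $(S_i)$ is, under $\widetilde{\p}$, a centered random walk with variance $\sigma^2$ per step. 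On the indicator event we have two active constraints: the \emph{terminal} one, $S_{n_k}\le(a^-+\varepsilon)n_k^{1/3}$, and the \emph{bulk} one, $S_i\le(a^++\varepsilon)i^{1/3}$ for $M_\varepsilon\le i\le n_k$.

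The main step is to push these two constraints through the upper-bound computation of Section~\ref{section_upper_bound}. That section treats the linear barrier $ai^{1/3}$ with $a<a_c$ and bounds $\e[Z_n]$ by decomposing over the terminal value $S_n=y$ and controlling, for each $y$, a ballot-type probability that the random walk remains below the barrier. Rerunning that decomposition with the barrier $(a^++\varepsilon)i^{1/3}$ (so $a^++\varepsilon\ge a_c$), one finds that, without any terminal cap, the dominant contribution to the $y$-integral comes from $y$ inside (or very close to) the band $[(a^+-b_{a^+})n^{1/3},(a^++\varepsilon)n^{1/3}]$ that hosts the typical survivors of Proposition~\ref{survivors_number}. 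Our hypothesis $a^-<\frac{3\pi^2\sigma^2}{2b_{a^+}^2}=a^+-b_{a^+}$ ensures that, for $\varepsilon$ small enough, the terminal cap $y\le(a^-+\varepsilon)n_k^{1/3}$ sits strictly to the left of this band; the relevant integral is then dominated by its value at the corner $y=(a^-+\varepsilon)n_k^{1/3}$, yielding
\begin{equation*}
\e[Z_{n_k}]\le\exp\Bigl(\bigl(b_{a^++\varepsilon}-(a^+-a^-)+o(1)\bigr)n_k^{1/3}\Bigr).
\end{equation*}
By continuity of the larger root $a\mapsto b_a$ on $[a_c,\infty)$, for $\varepsilon$ small enough $b_{a^++\varepsilon}<a^+-a^-$, the exponent is strictly negative, $\e[Z_{n_k}]\to0$, and extinction follows.

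The main obstacle is the technical input above: a sharp enough upper bound on $\widetilde{\p}(S_n\in dy,\,S_i\le(a^++\varepsilon)i^{1/3}\,\forall i\in[M_\varepsilon,n_k])$ for $y/n_k^{1/3}$ below the typical survival band, so that the $\ee^{y}$-weighted integral decays at the stated rate (the crude bound $\widetilde{\p}(\cdots)\le1$ is far too weak). Section~\ref{section_upper_bound} already develops such density estimates for the linear-barrier, $a<a_c$ case; the modifications alluded to in the paper amount to rerunning that ballot-type analysis with the bulk barrier taken to be $(a^++\varepsilon)i^{1/3}$ and the terminal cap kept throughout, and verifying that the dominant contribution indeed migrates to the corner, which is precisely where the strict inequality $a^-<a^+-b_{a^+}$ is used.
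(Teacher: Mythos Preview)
Your overall strategy---first moment along a subsequence $(n_k)$ with $\varphi(n_k)\le(a^-+\varepsilon)n_k^{1/3}$, combining the bulk constraint $S_i\le(a^++\varepsilon)i^{1/3}$ with the terminal cap, and the target exponent $b_{a^++\varepsilon}-(a^+-a^-)$---is correct. But you have misread the mechanism of Section~\ref{section_upper_bound}: it does \emph{not} decompose over the terminal value $S_n=y$, and it develops no density (``ballot-type'') estimate of the form $\p(S_n\in\d y,\,S_i\le ai^{1/3}\ \forall i)$ that your plan requires. What the paper does instead is introduce a \emph{second, lower} barrier $i\mapsto ai^{1/3}-n^{1/3}g(i/n)$ and split according to the first time $H(u)=j$ the trajectory falls below it (equation~(\ref{splitub})); each piece is then bounded via many-to-one and Mogul'skii's two-sided tube estimate (Corollary~\ref{mse_ub}). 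The lower barrier is precisely the device that avoids the local-limit input you describe: it caps $\ee^{S_j}$ by its value on the lower barrier and turns the remaining factor into a pure tube probability.

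The actual modification of Section~\ref{section_upper_bound} is therefore to run (\ref{splitub})--(\ref{r_j_un}) at $n=n_k$ with upper barrier $(a^++\varepsilon)i^{1/3}$. The terminal cap affects only~(\ref{r_inf_un}), where $\ee^{S_n}\le\ee^{(a^-+\varepsilon)n^{1/3}}$ replaces $\ee^{an^{1/3}}$; the $R_j$ bounds~(\ref{r_j_un}) are unchanged. One obtains
\[
\st_1=-(a^-+\varepsilon)+\frac{\pi^2\sigma^2}{2}\int_0^1\frac{\d t}{g(t)^2},
\qquad
\st_2=\min_{0\le\alpha\le1}\Bigl\{-(a^++\varepsilon)\alpha^{1/3}+g(\alpha)+\frac{\pi^2\sigma^2}{2}\int_0^\alpha\frac{\d t}{g(t)^2}\Bigr\}.
\]
Choose $g=f_\lambda$ where $f$ solves~(\ref{equ_diff}) with parameter $a^++\varepsilon>a_c$: by Proposition~\ref{study_equ_diff}(i) the solution is global with $f(t)\sim b_{a^++\varepsilon}t^{1/3}$, so the bracket in $\st_2$ equals the positive constant $\lambda^{-1/3}\st$, while $g(1)\to b_{a^++\varepsilon}$ gives $\st_1\to(a^+-a^-)-b_{a^++\varepsilon}$ as $\lambda\to\infty$. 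The hypothesis $a^-<a^+-b_{a^+}$, continuity of $a\mapsto b_a$, and a small $\varepsilon$ make both rates strictly positive, hence extinction. Note in particular that since we are now in case~(A) of Proposition~\ref{disjonction}, one \emph{cannot} impose $g(1)=0$ as was done for $a<a_c$; this is why $\st_1$ and $\st_2$ have to be controlled separately here.
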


We notice that $\frac{3\pi^2\sigma^2}{2 b_{a^+}^2}\le\frac{3\pi^2\sigma^2}{2 b_{a_c}^2}=\frac{a_c}{3}<a_c$.

When $a^+\ge a_c$, $a^->\frac{3\pi^2\sigma^2}{2 b_{a^+}^2}$, there is not always survival. For example, if $\varphi(n)$ equals $a^+ n^{1/3}$ for $n$ even and $a^- n^{1/3}$ for $n$ odd, then $a^+$ does not matter, it is easy to see that there is extinction if $a^-<a_c$ : staying below this barrier is almost as difficult as for the barrier $n\mapsto a^- n^{1/3}$. The trouble comes from the fact that $\frac{\varphi(n)}{n^{1/3}}$ is too often close to $a^-$.

Actually the condition in Theorem \ref{result_bis} is sharp in the sense that, if we choose some $a^+\ge a_c$ and $a^->\frac{3\pi^2\sigma^2}{2 b_{a^+}^2}$, we can construct a barrier $\varphi$ satisfying $\limsup_{n\to \infty}\frac{\varphi(n)}{n^{1/3}}=a^+$ and $\liminf_{n\to \infty}\frac{\varphi(n)}{n^{1/3}}=a^-$ such that the process survives.
It suffices to take $\varphi(n)=a^- n^{1/3}$ if $n\in\{N^k:k\in \bN\}$ and $\varphi(n)=a^+ n^{1/3}$ else, for some integer $N$ big enough, depending on $a^+$ and $a^-$. The proof of this is essentially identical to the proof of the lower bound contained in Section \ref{section_survivors}.

\subsection{The reduction to the critical case}\label{subsec_red}
It is possible to apply these results to certain non critical branching random walks.
We analyze in which cases this reduction is possible in Appendix \ref{app_reduc}.
Let $t>0$ such that $\Phi(t)<+\infty$ and $\Phi'(t)<+\infty$.
We define a new branching random walk by changing the position of the individual $z$ into $\widetilde{V}(z)=tV(z)+\Psi(t) |z|$ for all $z\in \cT$.

Then, with obvious notation, a straightforward computation gives :
\begin{equation*}\widetilde{\Phi}(1)=\e \left[ \sum_{|u|=1}\ee^{-\widetilde{\xi}_u}\right]=1;\end{equation*}
\begin{equation*}\widetilde{\Phi}'(1)=-\e \left[ \sum_{|u|=1}\widetilde{\xi}_u \ee^{-\widetilde{\xi}_u}\right]
=t \frac{\Phi'(t)}{\Phi(t)} - \Psi(t)=t\Psi'(t)-\Psi(t).\end{equation*}

When we can find $t^*>0$ such that :
\begin{equation}\label{hyp_red}t^*\Psi'(t^*)-\Psi(t^*)=0,\end{equation}
 then the new branching random walk is critical and we can apply Theorem \ref{result}, provided $\widetilde{\sigma}^2:=\e \left[ \sum_{|u|=1}\widetilde{\xi}_u^2 \ee^{-\widetilde{\xi}_u}\right]
 =t^{*2} \Phi''(t^*)-\Psi(t^*)^2$ is finite.
This last condition, which is equivalent to $\Phi''(t^*)<\infty$ will always be fulfilled when $t^*<\zeta$.

The strict convexity of $\Psi$ implies that, when it exists, $t^*$ is unique.

The existence of $t^*$ is discussed in Appendix \ref{app_reduc}.

The rest of the paper is organized as follows:

Section \ref{section_preliminaries} introduces the tools we will use in the proof of our main results.

Section \ref{section_upper_bound} is devoted to the proof of the upper bound in Proposition \ref{extinction_rate}, which contains the first part of Theorem \ref{result}.

In Section \ref{section_survivors}, we prove Proposition \ref{survivors_number} which implies the second part of Theorem \ref{result}.

In Section \ref{section_extinction_rate}, we complete the proof of Proposition \ref{extinction_rate}. We skip many details of technical arguments already exposed in Section \ref{section_survivors} to obtain the lower bound and go back over some results of Section \ref{section_upper_bound} in order to prove that the two bounds agree.

\section{Some preliminaries}\label{section_preliminaries}

\subsection{Many-to-one lemma}

Since $\e \left[\sum_{|u|=1} \ee^{-\xi_u}=1\right]$, we can define the law of a random variable $X$ such that for any measurable nonnegative function $f$, \begin{equation*}\e[f(X)]=\e \left[ \sum_{|u|=1} \ee^{-\xi_u}f(\xi_u)\right].\end{equation*}

Then $\e [X] = \e \left[ \sum_{|u|=1}\xi_u \ee^{-\xi_u}\right]$ so that $X$ is centered by hypothesis.

We denote \begin{equation*}\sigma^2:=\e[X^2]=\Phi''(1)\end{equation*}

\noindent and make the additional assumption that $\sigma^2<+\infty$.

Let $(X_i)_{i\in \bN^*}$ be a i.i.d. sequence of copies of $X$.
Write for any $n\in \bN$, $S_n:=\sum_{0<i\le n} X_i$. $S$ is then a mean-zero random-walk starting from the origin.

We can now state the many-to-one lemma: 

\begin{lemma}[Biggins-Kyprianou \cite{BK97}]\label{afo}
For any $n\ge 1$ and any measurable function $F: \r^n \rightarrow [0,+\infty)$,
\begin{equation*}\e\left[ \sum_{|u|=n} \ee^{-V(u)}F(V(u_i),1\le i \le n)\right]=\e\left[F(S_i),1\le i \le n)\right].\end{equation*}
\end{lemma}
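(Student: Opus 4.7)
The plan is to prove Lemma \ref{afo} by induction on $n$, using the tilting identity that defines $X$ together with the branching property.

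For the base case $n=1$, the identity
\[
\e\left[\sum_{|u|=1} \ee^{-\xi_u} F(\xi_u)\right] = \e[F(X)]
\]
is, with $F$ a function of one variable, exactly the definition of the law of $X$ given just before the statement of the lemma. So nothing needs to be checked here beyond unpacking definitions.

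For the inductive step, suppose the identity holds at level $n$, and consider the level $n+1$ sum. Each $u$ with $|u|=n+1$ factors as $u=(w,u)$ where $w=u_n$ is the parent and $u\in c(w)$ is one of its children, with $V(u)=V(w)+\xi_u$. Thus I would split
\[
\sum_{|u|=n+1} \ee^{-V(u)} F\bigl(V(u_1),\ldots,V(u_{n+1})\bigr)
= \sum_{|w|=n} \ee^{-V(w)} \sum_{u\in c(w)} \ee^{-\xi_u} F\bigl(V(w_1),\ldots,V(w),V(w)+\xi_u\bigr).
\]
Now I would take the expectation by conditioning on $\cF_n:=\sigma(V(v):|v|\le n)$. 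By the branching property, the point process of displacements $\{\xi_u:u\in c(w)\}$ attached to a given $w$ with $|w|=n$ is independent of $\cF_n$ and distributed like the offspring of the root. Applying the defining tilt for $X$, one obtains
\[
\e\!\left[\sum_{u\in c(w)} \ee^{-\xi_u} F(x_1,\ldots,x_n,x_n+\xi_u)\,\Big|\,\cF_n\right]_{\,x_i=V(w_i)}
= \e\bigl[F(V(w_1),\ldots,V(w),V(w)+X)\bigr],
\]
where the outer expectation on the right is only over the independent copy of $X$. Plugging this back in gives
\[
\e\!\left[\sum_{|u|=n+1} \ee^{-V(u)} F\bigl(V(u_i),1\le i\le n+1\bigr)\right]
= \e\!\left[\sum_{|w|=n} \ee^{-V(w)} G\bigl(V(w_i),1\le i\le n\bigr)\right],
\]
with $G(x_1,\ldots,x_n):=\e[F(x_1,\ldots,x_n,x_n+X)]$. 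Applying the induction hypothesis to $G$ and using that $X_{n+1}$ is an independent copy of $X$, the right-hand side becomes $\e[G(S_1,\ldots,S_n)]=\e[F(S_1,\ldots,S_n,S_n+X_{n+1})]=\e[F(S_1,\ldots,S_{n+1})]$, finishing the induction.

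There is no real obstacle here; the only point demanding care is the justification of the conditional tilt identity above, which requires that $F$ be nonnegative (so Fubini/Tonelli applies freely) and that the point processes of children of distinct individuals be i.i.d.\ copies of the offspring process at the root, which is precisely the standing assumption on $\cT$ made at the start of the introduction. Otherwise the proof is a clean induction combining the defining identity $\e[f(X)]=\e[\sum_{|u|=1}\ee^{-\xi_u}f(\xi_u)]$ with the branching property.
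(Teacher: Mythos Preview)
Your proof is correct and is the standard induction argument for the many-to-one lemma. Note that the paper itself does not supply a proof of this statement: it is simply quoted as a lemma with a reference to Biggins--Kyprianou \cite{BK97}, so there is no ``paper's own proof'' to compare against. Your argument is exactly the kind of self-contained derivation one would expect, and the care you take with the conditioning step and the use of nonnegativity of $F$ for Tonelli is appropriate.
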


\subsection{Mogul'skii's estimate}

Here we state a particular case (we set the normalizing sequence) of Mogul'skii's estimate:

\begin{theorem}[Mogul'skii \cite{Mog74}]\label{mse}
Let $S$ be a random walk starting from $0$, with zero mean and finite variance $\sigma^2$.
Let $g_1$ and $g_2$ be continuous functions $[0,1]\rightarrow \r$ such that
\begin{equation}\label{cond_mse} g_1(0)\le 0 \le g_2(0) \textrm{ and } \forall t\in [0,1], g_1(t)< g_2(t).\end{equation}
We consider the measurable event
\begin{equation*}E_j=\left\{\forall i\le j, g_1(\frac{i}{j})\le \frac{S_i}{j^{1/3}} \le g_2(\frac{i}{j})\right\}.\end{equation*}
Then
\begin{equation*}\lim_{j\rightarrow \infty} \frac{1}{j^{1/3}} \log \p (E_j)=-\frac{\pi^2 \sigma^2}{2}\int_0^1\frac{\d t}{[g_2(t)-g_1(t)]^2}.\end{equation*}
\end{theorem}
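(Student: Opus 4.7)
The plan is to discretize the time interval $[0,1]$ and reduce the estimate to a single "flat-strip" probability: for a mean-zero random walk with variance $\sigma^2$, if $\alpha < \beta$ are fixed and the walk starts from a point in the bulk of $(\alpha j^{1/3}, \beta j^{1/3})$, then
\begin{equation*}
P\bigl(\forall i \le m,\ S_i \in (\alpha j^{1/3}, \beta j^{1/3})\bigr) = \exp\!\left(-\frac{m}{j^{2/3}} \cdot \frac{\pi^2 \sigma^2}{2(\beta-\alpha)^2}\,(1+o(1))\right)
\end{equation*}
as $j \to \infty$ with $m = \Theta(j)$. This is the classical Dirichlet-eigenvalue estimate: after the Donsker rescaling $t \mapsto S_{\lfloor jt \rfloor}/j^{1/3}$ the random walk behaves like a Brownian motion sped up by a factor $j^{1/3}$, and the leading eigenvalue of $-\tfrac{\sigma^2}{2}\partial_x^2$ on $(\alpha,\beta)$ with Dirichlet boundary is $\pi^2\sigma^2/(2(\beta-\alpha)^2)$. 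Establishing this uniformly over bulk starting points is the analytic heart of the proof.

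For the upper bound, I would partition $[0,1]$ into $k$ equal subintervals. By uniform continuity, on the $\ell$-th subinterval $g_1, g_2$ lie within $\varepsilon$ of their minimum and maximum values; let $w_\ell^- := \inf_{t\in[\ell/k,(\ell+1)/k]}(g_2(t)-g_1(t))$. The event $E_j$ forces, in particular, the walk to stay in a strip of width $w_\ell^- j^{1/3}$ during the $\ell$-th subinterval. Applying the Markov property at the division times $\lfloor j\ell/k\rfloor$ yields
\begin{equation*}
P(E_j) \le \prod_{\ell=0}^{k-1} \sup_{x} P_x\bigl(\forall\, i \le j/k,\ S_i \in (\text{strip of width } w_\ell^- j^{1/3})\bigr).
\end{equation*}
The flat-strip estimate gives each factor as $\exp(-j^{1/3}\cdot \frac{1}{k}\cdot \frac{\pi^2\sigma^2}{2(w_\ell^-)^2}(1+o(1)))$. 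Multiplying and letting $j \to \infty$ followed by $k \to \infty$ turns the exponent into a Riemann sum converging to $-\frac{\pi^2\sigma^2}{2}\int_0^1 dt/(g_2(t)-g_1(t))^2$.

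For the lower bound, I would restrict to a more specific event: require that at each division time $j\ell/k$ the walk lies in an interval of length $O(1)$ around the midpoint $\frac{g_1(t_\ell)+g_2(t_\ell)}{2}j^{1/3}$, and between consecutive division times the walk stays in the slightly larger strip of width $w_\ell^+ j^{1/3}$ with $w_\ell^+ := \sup_{t\in[\ell/k,(\ell+1)/k]}(g_2(t)-g_1(t))$. Chaining conditional probabilities via Markov and bounding each one below with the flat-strip estimate (together with a local CLT to control the ``landing'' factor at each division time, which is only polynomial in $j$ and thus negligible on the $j^{1/3}$ scale) gives a product whose log is again a Riemann sum, converging to the same integral as $k \to \infty$ and then $\varepsilon \to 0$. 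The initial condition $g_1(0) \le 0 \le g_2(0)$ guarantees that the walk can start in the bulk of the first strip.

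The main obstacle is the flat-strip estimate with the correct constant and uniformity over bulk starting points. For lattice walks this follows from a direct spectral analysis of the transition operator restricted to the strip; in the general case one invokes either the Komlós–Major–Tusnády strong approximation or a Skorokhod embedding to transfer the question to Brownian motion, where the eigenfunction expansion of the Dirichlet heat kernel on $(\alpha,\beta)$ yields the sharp rate $\pi^2\sigma^2/(2(\beta-\alpha)^2)$ and the required uniformity.
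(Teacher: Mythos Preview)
The paper does not prove this theorem at all: it is quoted as a known result from Mogul'skii \cite{Mog74} (the paper only states ``Here we state a particular case \ldots\ of Mogul'skii's estimate'' and then moves on to the corollaries). So there is no paper proof to compare your proposal against.

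That said, your sketch is a faithful outline of the standard proof. The decomposition into $k$ flat strips, the Markov chaining, and the identification of the single-strip rate as the Dirichlet eigenvalue $\pi^2\sigma^2/(2(\beta-\alpha)^2)$ are exactly the ingredients Mogul'skii uses. Two small comments: for the upper bound you should enlarge the strip on each subinterval (use the outer envelope, not the inner width $w_\ell^-$), since $E_j$ only implies the walk stays in the \emph{wider} strip; the Riemann sum then approaches the integral from below, which is the correct direction for an upper bound on $\log P(E_j)$. Conversely, for the lower bound you want to use strips slightly \emph{narrower} than $g_2-g_1$ so that staying in them forces $E_j$. With these roles of $w_\ell^\pm$ swapped, the argument goes through; the flat-strip estimate itself can be obtained directly (as in Mogul'skii's original paper) without invoking KMT, though your route via strong approximation is also valid.
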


\begin{corollary}\label{mse_ub}
The upper bound in Theorem \ref{mse} is still valid with the second condition in equation (\ref{cond_mse}) replaced by
\begin{equation*} \forall t\in [0,1], g_1(t)\le g_2(t).\end{equation*}
\end{corollary}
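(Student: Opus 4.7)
The strategy is a standard approximation: widen the strip by a small $\varepsilon$, apply Theorem \ref{mse} to the widened strip, then let $\varepsilon\to 0$. For $\varepsilon>0$, define
\begin{equation*}
g_1^\varepsilon(t):=g_1(t)-\varepsilon,\qquad g_2^\varepsilon(t):=g_2(t)+\varepsilon,\qquad t\in[0,1].
\end{equation*}
These are continuous, and $g_1^\varepsilon(0)\le -\varepsilon<0<\varepsilon\le g_2^\varepsilon(0)$, and $g_2^\varepsilon(t)-g_1^\varepsilon(t)\ge 2\varepsilon>0$ for all $t$. Hence $(g_1^\varepsilon,g_2^\varepsilon)$ satisfies the hypotheses (\ref{cond_mse}) of Theorem \ref{mse}.

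First I would observe that under the weakened hypothesis $g_1\le g_2$, the event $E_j$ associated with $(g_1,g_2)$ is contained in the event $E_j^\varepsilon$ associated with $(g_1^\varepsilon,g_2^\varepsilon)$, since every constraint is strictly relaxed. Therefore $\p(E_j)\le\p(E_j^\varepsilon)$, and applying Theorem \ref{mse} to $(g_1^\varepsilon,g_2^\varepsilon)$ yields
\begin{equation*}
\limsup_{j\to\infty}\frac{1}{j^{1/3}}\log\p(E_j)\le -\frac{\pi^2\sigma^2}{2}\int_0^1\frac{\d t}{[g_2(t)-g_1(t)+2\varepsilon]^2}.
\end{equation*}

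Next I would let $\varepsilon\downarrow 0$. Since $g_2-g_1\ge 0$, the integrands $[g_2(t)-g_1(t)+2\varepsilon]^{-2}$ are nonnegative and increase monotonically to $[g_2(t)-g_1(t)]^{-2}\in[0,+\infty]$ as $\varepsilon\downarrow 0$. By the monotone convergence theorem,
\begin{equation*}
\int_0^1\frac{\d t}{[g_2(t)-g_1(t)+2\varepsilon]^2}\longrightarrow \int_0^1\frac{\d t}{[g_2(t)-g_1(t)]^2}\in[0,+\infty],
\end{equation*}
and this gives the desired upper bound (understood as $-\infty$, i.e.\ superexponential decay, when the limit integral diverges).

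There is no real obstacle here; the only point to check carefully is the monotone convergence step, which is automatic because $g_2-g_1\ge 0$ ensures that the sequence of integrands is monotone and nonnegative, so the limit holds even if the limiting integral is $+\infty$. No additional integrability of $1/(g_2-g_1)^2$ is needed, which is precisely why the weakening from strict to non-strict inequality costs nothing in the upper bound.
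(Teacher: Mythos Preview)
Your proof is correct and follows essentially the same approach as the paper: widen the strip by $\varepsilon$, apply Theorem \ref{mse}, and pass to the limit via monotone convergence. The only cosmetic difference is that the paper perturbs only $g_2$ (setting $\widetilde g_2=g_2+\varepsilon$), whereas you perturb both boundaries; either choice works and the remainder of the argument is identical.
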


\begin{proof}
Let $g_1$, $g_2$ and $E_j$ be as in the statement of the corollary.
For any $\varepsilon>0$, replace $g_2$ by $\widetilde{g}_2:=g_2+\varepsilon$. For any integer $j\ge 1$, $g_1$ and $\widetilde{g}_2$ allow us to define:
\begin{equation*}
\widetilde{E}_j:=\left\{\forall i\le j, g_1(\frac{i}{j})\le \frac{S_i}{j^{1/3}} \le \widetilde{g}_2(\frac{i}{j})\right\}.
\end{equation*}
Now we apply Theorem \ref{mse} to this pair of functions:
\begin{equation*}\limsup_{j\rightarrow \infty} \frac{1}{j^{1/3}} \log \p (\widetilde{E}_j)\le-\frac{\pi^2 \sigma^2}{2}\int_0^1\frac{\d t}{[\widetilde{g}_2(t)-g_1(t)]^2}.\end{equation*}
Using the fact that  $E_j \subset \widetilde{E}_j$, we obtain
\begin{equation*}\limsup_{j\rightarrow \infty} \frac{1}{j^{1/3}} \log \p (E_j)\le-\frac{\pi^2 \sigma^2}{2}\int_0^1\frac{\d t}{[\varepsilon +g_2(t)-g_1(t)]^2}.\end{equation*}
Now we make $\varepsilon$ tend to $0$ and conclude with the monotone convergence theorem.
\end{proof}

\begin{corollary}\label{mse_local}
Under the conditions of Theorem \ref{mse}, we have, for any $\varepsilon>0$,
\begin{equation*}
\lim_{j\rightarrow \infty} \frac{1}{j^{1/3}} \log \p (E_j \cap \{g_2(1)-\varepsilon \le \frac{S_j}{j^{1/3}}\le g_2(1)\} )=-\frac{\pi^2 \sigma^2}{2}\int_0^1\frac{\d t}{[g_2(t)-g_1(t)]^2}.
\end{equation*}
\end{corollary}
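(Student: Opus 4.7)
The plan is to establish the stated equality by two matching inequalities. The upper bound is free: the event in question is contained in $E_j$, so Theorem \ref{mse} immediately gives $\limsup_{j\to\infty} \frac{1}{j^{1/3}}\log \p(E_j \cap \{g_2(1)-\varepsilon \le S_j/j^{1/3} \le g_2(1)\}) \le -\frac{\pi^2\sigma^2}{2}\int_0^1 \frac{\d t}{[g_2(t)-g_1(t)]^2}$. All the work goes into the lower bound, and without loss of generality one may assume $\varepsilon < g_2(1)-g_1(1)$, since otherwise the target event coincides with $E_j$ and the conclusion is just Theorem \ref{mse}.

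The strategy for the lower bound is to squeeze the walk into a slightly thinner tube near $t=1$ so that any path confined to the tube is automatically forced to end in the required endpoint window. Fix small $\delta>0$. I would set $\tilde g_2 := g_2$ and $\tilde g_1 := g_1$ on $[0,1-\delta]$, and on $[1-\delta,1]$ replace the lower boundary by $\tilde g_1(t) := \max\{g_1(t),\ell(t)\}$, where $\ell$ is the affine interpolation from $(1-\delta, g_1(1-\delta))$ to $(1, g_2(1)-\varepsilon)$. Continuity and the initial condition $\tilde g_1(0) \le 0 \le \tilde g_2(0)$ are automatic, and for $\delta$ small enough the strict inequality $\tilde g_1 < \tilde g_2$ on $[0,1]$ survives (using continuity of $g_2$ near $t=1$ and the fact that $\ell(1)=g_2(1)-\varepsilon < g_2(1)$). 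The corresponding event $\tilde E_j$ then satisfies $\tilde E_j \subset E_j \cap \{g_2(1)-\varepsilon \le S_j/j^{1/3} \le g_2(1)\}$ because $\tilde g_1 \ge g_1$ and $\tilde g_1(1)=g_2(1)-\varepsilon$.

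Applying Theorem \ref{mse} to the pair $(\tilde g_1,\tilde g_2)$ yields a lower bound at rate $-\frac{\pi^2\sigma^2}{2}\int_0^1 \d t/[\tilde g_2-\tilde g_1]^2$. This integral splits as $\int_0^{1-\delta} \d t/[g_2-g_1]^2$ (unchanged by construction) plus a correction on $[1-\delta,1]$ that is $O(\delta/\varepsilon^2)$, because the tube width $\tilde g_2-\tilde g_1$ stays bounded below by roughly $\varepsilon/2$ there. Letting $\delta\to 0$ with $\varepsilon$ fixed, the truncated integral converges to $\int_0^1 \d t/[g_2-g_1]^2$ and the correction vanishes, matching the upper bound. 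The one place I anticipate any technical fuss is verifying both that the modified pair continues to satisfy the strict-inequality hypothesis of Theorem \ref{mse} uniformly on $[0,1]$ and that the correction term is indeed $O(\delta/\varepsilon^2)$; both are routine thanks to continuity of $g_1, g_2$ and the freedom to take $\delta$ small relative to the fixed $\varepsilon$.
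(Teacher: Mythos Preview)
Your proposal is correct and follows essentially the same approach as the paper: both squeeze the lower boundary near $t=1$ by taking $\tilde g_1(t)=\max\{g_1(t),\ell(t)\}$ for a line $\ell$ ending at $(1,g_2(1)-\varepsilon)$, apply Theorem~\ref{mse} to the thinner tube, and then pass to a limit. The only cosmetic difference is the parametrization of the limit: the paper uses a line of slope $A$ through $(1,g_2(1)-\varepsilon)$ and sends $A\to\infty$ (invoking monotone convergence), whereas you fix the left endpoint at $(1-\delta,g_1(1-\delta))$ and send $\delta\to 0$ with an explicit $O(\delta/\varepsilon^2)$ bound on the correction.
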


\begin{proof}
Let $g_1$, $g_2$ and $E_j$ be as above.
Fix $0<\varepsilon<g_2(1)-g_1(1)$ and $A>0$. From $g_1$ and $g_2$, we define $\widetilde{g}_1$ by
\begin{equation*}
\forall t\in [0,1], \widetilde{g}_1(t)=\max\{g_1(t),g_2(1)-\varepsilon +(t-1)A\}.
\end{equation*}
$\widetilde{g}_1$ and $g_2$ define some pipe $\widetilde{E}_j$ such that for any $j\ge 1 $,
\begin{equation*}\widetilde{E}_j \subset E_j \cap \{g_2(1)-\varepsilon \le \frac{S_j}{j^{1/3}}\le g_2(1)\}.\end{equation*}
We apply Theorem \ref{mse} to this new pair of functions to get
\begin{equation*}
\liminf_{j\rightarrow \infty} \frac{1}{j^{1/3}} \log \p (E_j \cap \{g_2(1)-\varepsilon \le \frac{S_j}{j^{1/3}}\le g_2(1)\} )\ge-\frac{\pi^2 \sigma^2}{2}\int_0^1\frac{\d t}{[g_2(t)-\widetilde{g}_1(t)]^2}.
\end{equation*}
Now we make $A$ go to infinity in the last inequality and use the monotone convergence theorem to obtain the lower bound:
\begin{equation*}
\lim_{j\rightarrow \infty} \frac{1}{j^{1/3}} \log \p (E_j \cap \{g_2(1)-\varepsilon \le \frac{S_j}{j^{1/3}}\le g_2(1)\} )\ge-\frac{\pi^2 \sigma^2}{2}\int_0^1\frac{\d t}{[g_2(t)-g_1(t)]^2}.
\end{equation*}
The upper bound is an obvious consequence of Theorem \ref{mse}, so the corollary is proved.
\end{proof}

\section{Upper bound for the survival probability}\label{section_upper_bound}

\subsection{Splitting the survival probability}
Fix $a>0$.

Obviously,
\begin{equation*}\p\left(\exists u \in \lod, \forall i, V(u_i)\le a i^{1/3} \right)=\lim_{n\rightarrow \infty} \p \left(\exists u \in \cT_n, \forall i\le n, V(u_i)\le a i^{1/3} \right).\end{equation*}

Now on, $n\ge1$ is fixed.

We now set a second barrier $i\mapsto a i^{1/3} - b_{i,n}$  (with $ b_{i,n}>0$ for $1\le i \le n$ yet to be determined) below the first one $i\mapsto a i^{1/3}$: if a particle crosses it, then its descendance will be likely to stay below the first one until generation $n$.

Let $H(u)$ be the first time a particle $u\in \cT_n$ crosses this second barrier ($H(u)=\infty$ if the particle stays between the barriers until time $n$). We split the sum accordingly:
\begin{equation}\p \left(\exists u \in \cT_n, \forall i\le n, V(u_i)\le a i^{1/3} \right)\le R_\infty+\sum_{j=1}^n R_j , \label{splitub}
\end{equation}
where
\begin{equation*}R_j=\p  \left(\exists u \in \cT_n,H(u)=j,\forall i\le n, V(u_i)\le a i^{1/3}\right) \textrm{ for }j=1,\dots,n,\infty.\end{equation*}

By Chebyshev's inequality and then Lemma \ref{afo}, we get
\begin{eqnarray}
R_\infty & \le & \e \left[\sum_{u \in \cT_n} \ind_{\left\{\forall i\le n, a i^{1/3}-b_{i,n}\le V(u_i)\le a i^{1/3}\right\}} \right]\nonumber\\
& \le & \e  \left[\ee^{S_n} \ind_{\left\{\forall i\le n, a i^{1/3}-b_{i,n}\le S_i\le a i^{1/3}\right\}} \right]\nonumber\\
& \le & \ee^{a n^{1/3}}\p  \left(\forall i\le n, a i^{1/3}-b_{i,n}\le S_i\le a i^{1/3} \right)\label{r_inf_un}.
\end{eqnarray}

For $1\le j\le n$,

\begin{eqnarray}
R_j & \le & \e  \left[\sum_{v \in \cT_j} \ind_{\left\{\forall i < j, a i^{1/3}-b_{i,n}\le V(v_i)\le a i^{1/3}, V(v)<a j^{1/3}-b_{j,n}\right\}} \right]\nonumber\\
&\le & \e  \left[\ee^{S_j} \ind_{\left\{\forall i < j, a i^{1/3}-b_{i,n}\le S_i\le a i^{1/3}, V(S_j)<a j^{1/3}-b_{j,n}\right\}} \right]\nonumber\\
& \le & \ee^{a j^{1/3}-b_{j,n}}\p  \left(\forall i < j, a i^{1/3}-b_{i,n}\le S_i\le a i^{1/3} \right)\label{r_j_un}.
\end{eqnarray}

\subsection{Asymptotics for $R_\infty$}
Fix $\varepsilon>0$.

In order to apply Corollary \ref{mse_ub}, we set $b_{i,n}:=n^{1/3} g(\frac{i}{n})$ for some continuous function $g: [0,1]\mapsto [0,+\infty)$. Then we have
\begin{equation}\label{r_inf_deux} \exists N\ge1, \forall n\ge N, \frac{1}{n^{1/3}} \log \p  \left(\forall i\le n, a i^{1/3}-b_{i,n}\le S_i\le a i^{1/3} \right)\le -\frac{\pi^2 \sigma^2}{2}\int_0^1\frac{\d t}{g(t)^2}+\varepsilon.
\end{equation}
Putting together equations (\ref{r_inf_un}) and (\ref{r_inf_deux}), we get
\begin{equation*}R_\infty \le \exp\left(n^{1/3} [a-\frac{\pi^2 \sigma^2}{2}\int_0^1\frac{\d t}{g(t)^2}+\varepsilon]\right).\end{equation*}
Since $\varepsilon$ is arbitrary small, we get
\begin{equation}\label{estim_r_inf}
\limsup_{n\rightarrow \infty}\frac{\log R_\infty}{n^{1/3}} \le -\st_1,
\end{equation}
where
\begin{equation}\label{rate_r_inf}
\st_1:=-a+\frac{\pi^2 \sigma^2}{2}\int_0^1\frac{\d t}{g(t)^2}.
\end{equation}

\subsection{Asymptotics for $R_j$}

Now suppose $j=j(n):= \alpha n +1$ for some rational $\alpha\in[0,1)$ and for values of $n$ such that $\alpha n$ is an integer.
Define for any $t\in[0,1]$, $g_\alpha(t):=\alpha^{-1/3}g(\alpha t)$.
We can now apply Corollary \ref{mse_ub} so that for $j(n)$ large enough, we have
\begin{eqnarray}
\frac{1}{(j-1)^{1/3}} \log \p  \left(\forall i<j, a i^{1/3}-(j-1)^{1/3}g_\alpha(\frac{i}{j-1})\le S_i\le a i^{1/3} \right) & \le &-\frac{\pi^2 \sigma^2}{2}\int_0^1\frac{\d t}{g_\alpha (t)^2}+\varepsilon, \nonumber\\
\frac{1}{(\alpha n)^{1/3}} \log \p  \left(\forall i<j, a i^{1/3}-n^{1/3}g(\frac{i}{n})\le S_i\le a i^{1/3} \right) & \le &-\frac{\pi^2 \sigma^2}{2\alpha^{1/3}}\int_0^\alpha\frac{\d u}{g(u)^2}+\varepsilon.\label{r_j_deux}
\end{eqnarray}

Putting together equations (\ref{r_j_un}) and (\ref{r_j_deux}), we get
\begin{equation}\label{r_alpha_un}
R_{\alpha,n}:=R_j(n) \le \exp\left(n^{1/3} [a \alpha^{1/3}-g(\alpha)-\frac{\pi^2 \sigma^2}{2}\int_0^\alpha\frac{\d t}{g(t)^2}+\varepsilon]\right).
\end{equation}

Unfortunately, these inequalities hold only for $n$ greater than some $n_0$ which depends on $\alpha$ (and such that $\alpha n$ is an integer).
Thus we can only apply Theorem \ref{mse} to a finite number of values of $\alpha$ at the same time.

Since $\alpha \mapsto g(\alpha)$ and $\alpha \mapsto \int_0^\alpha\frac{\d t}{g(t)^2}$ are uniformly continuous on the compact $[0,1]$, we can choose $N\ge 1$ such that $\forall \alpha_1<\alpha_2\in[0,1], \left(\alpha_2-\alpha_1\le \frac{1}{N}\Rightarrow |\frac{\pi^2 \sigma^2}{2}\int_{\alpha_1}^{\alpha_2}\frac{\d t}{g(t)^2}|<\varepsilon\right)$.
We apply $N-1$ times Theorem \ref{mse} in order to obtain equation (\ref{r_alpha_un}) for simultaneously $\alpha=\frac{1}{N},\frac{2}{N},\dots,\frac{N-1}{N}$ for $n\ge n_0$ for some $n_0\ge 1$ and with $n\in N \bN$.
Observe that equation (\ref{r_alpha_un}) trivially holds for $\alpha=0$.

For any $1\le j \le n $, we consider $\alpha:=\frac{j-1}{n}$ and $0\le k < N$ such that $\frac{k}{N}\le \alpha <\frac{k+1}{N}$. Using (\ref{r_alpha_un}) for $\widetilde{\alpha}=\frac{k}{N}$ and the fact that $m\mapsto \p \left(\forall i<m, a i^{1/3}-n^{1/3}g(\frac{i}{n})\le S_i\le a i^{1/3} \right)$ is nonincreasing, we have
\begin{eqnarray}
R_{\alpha,n}&\le& \p \left(\forall i\le\frac{kn}{N}, a i^{1/3}-n^{1/3}g(\frac{i}{n})\le S_i\le a i^{1/3} \right)\exp\left(n^{1/3} [a \alpha^{1/3}-g(\alpha)]\right)\nonumber\\
&\le& \exp\left(n^{1/3} [a \alpha^{1/3}-g(\alpha)-\frac{\pi^2 \sigma^2}{2}\int_0^{\widetilde{\alpha}}\frac{\d t}{g(t)^2}+\varepsilon]\right)\nonumber\\
&\le& \exp\left(n^{1/3} [a \alpha^{1/3}-g(\alpha)-\frac{\pi^2 \sigma^2}{2}\int_0^\alpha\frac{\d t}{g(t)^2}+2\varepsilon]\right).\label{r_alpha_deux}
\end{eqnarray}

As a consequence,
\begin{equation}\label{estim_sum_r_j}
\limsup_{n\rightarrow \infty,n\in N \bN}\frac{1}{n^{1/3}}\log \sum_{j=1}^n R_j(n) \le -\st_2+2\varepsilon.
\end{equation}
where
\begin{equation}\label{rate_sum_r_j}
\st_2:=\min_{0\le\alpha\le1} \left\{-a \alpha^{1/3}+g(\alpha)+\frac{\pi^2 \sigma^2}{2}\int_0^\alpha\frac{\d t}{g(t)^2}\right\}.
\end{equation}

\begin{remark}This is enough to prove the extinction when $a<a_c$ in Theorem \ref{result} but does not rigorously leads to the upper bound of Proposition \ref{extinction_rate} because of the restriction $n\in N \bN$ (with $N$ depending on $\varepsilon$). This will be fixed in Section \ref{suppress_restriction}.
\end{remark}

Combining (\ref{estim_sum_r_j}) with (\ref{estim_r_inf}) and (\ref{splitub}), we obtain
\begin{equation*}
\limsup_{n\rightarrow \infty,n\in N \bN}\frac{1}{n^{1/3}}\log \p \left(\exists u \in \cT_n, \forall i\le n, V(u_i)\le a i^{1/3} \right) \le -\st+2\varepsilon,
\end{equation*}
where $\st:=\min(\st_1,\st_2)$.

\subsection{Choice of $g$ for the upper bound}

Set $a>0$ and $\st>0$.
We are looking for a function $g$ such that $\st>0$ when $a<a_c$. Taking $\varepsilon$ small enough and $N$ large enough, the existence of such a function implies extinction and ends the proof the first part of Theorem \ref{result}.

We add the constraint $g(1)=0$. Taking $\alpha=1$, we see from (\ref{rate_sum_r_j}) and (\ref{rate_r_inf}) that this implies $\st_2\le\st_1$ and, as a result, $\st=\st_2$.

We choose $g$ in such a way that the quantity $-a \alpha^{1/3}+g(\alpha)+\frac{\pi^2 \sigma^2}{2}\int_0^\alpha\frac{\d t}{g(t)^2}$ which appears in (\ref{rate_sum_r_j}) does not depend on $\alpha$.
Hence $g$ is defined as the solution of the equation:
\begin{equation}\label{equ_int}
\forall t \in [0,1], -a t^{1/3}+f(t)+\frac{\pi^2 \sigma^2}{2}\int_0^t\frac{\d u}{f(u)^2}=\st,
\end{equation}
where $\st$ is some positive constant, the value of which is to be set later.

Equivalently, this can be written $f(0)=\st$ and $\forall t \in (0,1)$,
\begin{equation}\label{equ_diff}
f'(t)=\frac{a}{3} t^{-2/3}-\frac{\pi^2 \sigma^2}{2f(t)^2}.
\end{equation}
By the Picard-Lindel\"of theorem, such an ordinary differential equation admits a unique maximal solution $f$ defined on an interval $[0,t_{max})$ with $t_{max}\in (0,+\infty]$. And if $t_{max}<+\infty$, then $f$ has limit $0$ or $+\infty$ when $t$ goes to $t_{max}$.
\begin{remark}
The fact that $f'(0)$ does not exist here is not troublesome at all since the proof of the theorem, using Picard iterates, actually relies on equation (\ref{equ_int}).
\end{remark}

In order to prove that there exists $\st$ such that $t_{max}=1$ and $lim_{t\rightarrow 1}f(t)=1$, we get a close look to the differential equation.

First we state three simple results specific to this differential equation.

\begin{proposition}[invariance property]\label{inv_prop}
Let $\lambda>0$ and $f$ a continuous function $[0,t_0)\mapsto (0,+\infty)$. Define $f_\lambda: (0,\lambda^{-1}t_0)\mapsto(0,+\infty)$ by
\begin{equation*}
f_\lambda(t)=\lambda^{-1/3}f(\lambda t).
\end{equation*}
Then $f$ satisfies equation (\ref{equ_diff}) on $(0,t_0)$ if and only if $f_\lambda$ does on $(0,\lambda^{-1} t_0)$.
\end{proposition}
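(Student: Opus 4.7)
The plan is a direct chain-rule verification that the ODE (\ref{equ_diff}) is preserved under the rescaling $f \mapsto f_\lambda$, together with the observation that this rescaling is invertible.

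First I would compute $f_\lambda'(t)$ by the chain rule: differentiating $f_\lambda(t) = \lambda^{-1/3} f(\lambda t)$ yields $f_\lambda'(t) = \lambda^{2/3} f'(\lambda t)$. Assuming $f$ satisfies (\ref{equ_diff}) on $(0,t_0)$, I would substitute $f'(\lambda t) = \frac{a}{3}(\lambda t)^{-2/3} - \frac{\pi^2 \sigma^2}{2 f(\lambda t)^2}$ into this identity. The factor $\lambda^{2/3}$ combines with $(\lambda t)^{-2/3}$ to reproduce $\frac{a}{3} t^{-2/3}$, while the remaining term becomes $-\frac{\lambda^{2/3} \pi^2 \sigma^2}{2 f(\lambda t)^2}$.

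Second, I would check that this last term equals $-\frac{\pi^2 \sigma^2}{2 f_\lambda(t)^2}$, which is immediate from $f_\lambda(t)^2 = \lambda^{-2/3} f(\lambda t)^2$. This proves the forward implication: whenever $t \in (0, \lambda^{-1} t_0)$, the function $f_\lambda$ satisfies (\ref{equ_diff}) at $t$.

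For the converse, I would not redo the calculation but simply observe that the transformation $f \mapsto f_\lambda$ is an involution up to parameter: a quick check shows $(f_\lambda)_{1/\lambda}(t) = \lambda^{1/3} f_\lambda(t/\lambda) = \lambda^{1/3} \cdot \lambda^{-1/3} f(t) = f(t)$. Hence if $f_\lambda$ satisfies (\ref{equ_diff}) on $(0, \lambda^{-1} t_0)$, applying the forward direction with parameter $1/\lambda$ to $f_\lambda$ shows that $f = (f_\lambda)_{1/\lambda}$ satisfies it on $(0, t_0)$. No step here looks like a real obstacle; the only thing to be careful about is keeping the exponents straight and confirming that the domains match up under the rescaling, which the formula $t \mapsto \lambda t$ handles automatically.
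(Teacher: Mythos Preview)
Your proof is correct and follows essentially the same route as the paper: a chain-rule verification for the forward direction, then the observation that $(f_\lambda)_{1/\lambda}=f$ to get the converse without repeating the computation. The only cosmetic difference is that the paper phrases the group law as $(f_\lambda)_{\lambda'}=f_{\lambda\lambda'}$ before specializing to $\lambda'=\lambda^{-1}$, which amounts to exactly your check.
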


\begin{proof}
Assume that $f$ satisfies equation (\ref{equ_diff}) for any $0<t<t_0$. Then for any $0<t<\lambda^{-1}t_0$,
\begin{eqnarray*}
f_\lambda'(t)&=&\lambda^{2/3}f'(\lambda t)\\
&=&\lambda^{2/3}\left(\frac{a}{3} (\lambda t)^{-2/3} - \frac{\pi^2\sigma^2}{2f(\lambda t)^2}\right)\\
&=&\frac{a}{3} t^{-2/3} - \frac{\pi^2\sigma^2}{2f_\lambda(t)^2}.
\end{eqnarray*}
This means that $f_\lambda$ also satisfies equation (\ref{equ_diff}) for any $0<t<\lambda^{-1}t_0$.

Conversely, assume that $f_\lambda$ satisfies equation (\ref{equ_diff}) on $(0,\lambda^{-1} t_0)$. We notice that if $\lambda'>0$, then $(f\lambda)_{\lambda'}=f_{\lambda \lambda'}$. We take $\lambda'=\lambda^{-1}$. Hence $(f\lambda)_{\lambda'}=f$ also satifies equation (\ref{equ_diff}) for any $0<t<(\lambda \lambda')^{-1}t_0=t_0$.
\end{proof}

\begin{proposition}\label{crois_en_a}
Set $0<a_1<a_2$ and $\st>0$. Let $f_1$ and $f_2$ be functions $[0,t_{max})\mapsto (0,+\infty)$ such that
\begin{equation*}
\forall 0<t<t_{max}, \forall i\in\{1,2\},-a t^{1/3}+f_i(t)+\frac{\pi^2 \sigma^2}{2}\int_0^t\frac{\d u}{f_i(u)^2}=\st.
\end{equation*}
Then, for all $0\le t<t_{max}$, $f_1(t)\le f_2(t)$.
\end{proposition}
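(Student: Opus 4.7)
My plan is to reduce the comparison to a linear Volterra-type inequality for the difference $h := f_2 - f_1$, and then close it with an integrating-factor (equivalently, Gronwall) argument. First, by letting $t\to 0$ in the integral equation defining each $f_i$, we get $f_1(0)=f_2(0)=\st$, hence $h(0)=0$. Subtracting the two integral identities and using the factorization
\begin{equation*}
\frac{1}{f_1(u)^2}-\frac{1}{f_2(u)^2}=\frac{f_1(u)+f_2(u)}{f_1(u)^2 f_2(u)^2}\,(f_2(u)-f_1(u)),
\end{equation*}
I obtain
\begin{equation*}
h(t)=(a_2-a_1)\,t^{1/3}+\int_0^t K(u)\,h(u)\,\d u,\qquad K(u):=\frac{\pi^2\sigma^2}{2}\,\frac{f_1(u)+f_2(u)}{f_1(u)^2 f_2(u)^2}.
\end{equation*}
Since $f_1,f_2$ are continuous and strictly positive on $[0,t_{max})$, the kernel $K$ is continuous and non-negative there, so the integral is well defined.

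Next I set $\psi(t):=\int_0^t K(u) h(u)\,\d u$, so that $\psi(0)=0$ and
\begin{equation*}
\psi'(t)=K(t)h(t)=K(t)\bigl((a_2-a_1)t^{1/3}+\psi(t)\bigr).
\end{equation*}
This is a linear first-order ODE for $\psi$. Multiplying by the integrating factor $F(t):=\exp(-\int_0^t K(u)\,\d u)$ gives
\begin{equation*}
\bigl(F(t)\psi(t)\bigr)'=F(t)K(t)(a_2-a_1)t^{1/3}\ge 0.
\end{equation*}
Integrating from $0$ and using $\psi(0)=0$, $F>0$, I conclude $\psi(t)\ge 0$, and therefore
\begin{equation*}
h(t)=(a_2-a_1)t^{1/3}+\psi(t)\ge 0,
\end{equation*}
which is the desired inequality $f_1(t)\le f_2(t)$.

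No step here is really an obstacle; the only point to watch is that one must avoid circularly assuming $h\ge 0$ while manipulating the kernel. The integrating-factor formulation bypasses this issue, since the inequality $(F\psi)'\ge 0$ uses only the non-negativity of $K$ and of $(a_2-a_1)t^{1/3}$, not of $h$ itself. Alternatively, one could argue by contradiction: if $t_0:=\inf\{t>0:h(t)<0\}$ were finite, then $h\ge 0$ on $[0,t_0]$ forces $h(t_0)\ge(a_2-a_1)t_0^{1/3}>0$ by the integral equation, contradicting $h(t_0)=0$ by continuity; but the integrating-factor proof is cleaner and avoids case analysis.
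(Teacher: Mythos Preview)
Your proof is correct. The subtraction and factorization are accurate, the kernel $K$ is continuous on $[0,t_{max})$ (in particular bounded near $0$ because $f_i(0)=\st>0$), so $F(t)=\exp\bigl(-\int_0^t K\bigr)$ is well defined, and the integrating-factor computation $(F\psi)'=FK(a_2-a_1)t^{1/3}\ge 0$ goes through without any hidden use of $h\ge 0$. The alternative contradiction argument you sketch at the end is also sound.

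The paper proceeds differently: it localizes to a short interval $[t_{\mathrm{start}},t_{\mathrm{next}})$, runs Picard iteration for both equations simultaneously, shows by induction on the iterate index that $f_1^n\le f_2^n$, and then passes to the limit; the global statement is obtained by patching intervals together. Your route is more direct and cleaner here: once the difference $h$ satisfies a linear Volterra identity with a non-negative kernel and non-negative forcing, positivity drops out immediately from the integrating factor, with no need to choose small intervals or worry about convergence of iterates. The Picard argument, on the other hand, is closer to the existence theory already being invoked and would adapt with fewer changes if the right-hand side did not factor as nicely through $f_2-f_1$.
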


\begin{proof}
It suffices to prove that, if $0\le t_{start}$, $0<a_1<a_2$ and $0<x_1\le x_2$, then there exist $t_{next}>t_{start}$ such that there are functions $f_1$ and $f_2:[t_{start},t_{next})\mapsto (0,+\infty)$ such that
\begin{equation*}
\forall t_{start}\le t<t_{next}, \forall i\in\{1,2\},-a_i (t^{1/3}-t_{start}^{1/3})+f_i(t)+\frac{\pi^2 \sigma^2}{2}\int_{t_{start}}^t\frac{\d u}{f_i(u)^2}=x_i;
\end{equation*}
then, for any $t_{start}\le t<t_{next}$, $f_1(t)\le f_2(t)$.

We choose $t_{next}$ such that the Picard interates $f_i^n$ defined, for $i\in\{1,2\}$, by :
\begin{equation*}
\forall t_{start}\le t<t_{next},f_i^0(t)=x_i;
\end{equation*}
\begin{equation*}
\forall n\in\bN,\forall t_{start}\le t<t_{next}, f_i^{n+1}(t)=f_i^n(t_{start})+a_i (t^{1/3}-t_{start}^{1/3})-\frac{\pi^2 \sigma^2}{2}\int_{t_{start}}^t\frac{\d u}{f_i^n(u)^2},
\end{equation*}
exist and converge on $[t_{start},t_{next})$. The limits $f_i$ are solutions of the integral equations for $i\in\{1,2\}$.

It is easy to prove by induction on $n$ that
\begin{equation*}
\forall n\in \bN, \forall t_{start}\le t<t_{next}, f_1^n(t)\le f_2^n(t).
\end{equation*}
Letting $n$ tend to infinity gives us the desired conclusion.
\end{proof}

\begin{proposition}\label{disjonction}
Let $f$ as above.
Then we are in one of the following cases:

(A) $t_{max}=+\infty$ and $f(t)\rightarrow +\infty$ as $t\rightarrow +\infty$;

(B) $t_{max}<+\infty$ and $f(t)\rightarrow 0$ as $t\rightarrow t_{max}$.
\end{proposition}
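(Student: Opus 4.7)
My plan is to study the ODE (\ref{equ_diff}) via its \emph{null line} $g(t) := C\, t^{1/3}$ with $C := \sqrt{3\pi^2\sigma^2/(2a)}$, i.e.\ the locus on which the right-hand side of (\ref{equ_diff}) vanishes. From (\ref{equ_diff}) one reads directly that $f'(t)$ has the same sign as $f(t) - g(t)$. Since $f(0) = \st > 0 = g(0)$, continuity places $f$ strictly above $g$ on some initial interval. The argument then splits according to whether this remains true for all $t \in [0, t_{max})$ or not.

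\textbf{Case A: $f > g$ throughout $[0, t_{max})$.} Then $f$ is strictly increasing and $f(t) \ge C t^{1/3}$, so $f \to +\infty$ as soon as $t_{max} = +\infty$. To rule out $t_{max} < +\infty$, I use the remark preceding the statement: $f$ must tend either to $0$ or to $+\infty$, and the bound $f \ge C t^{1/3}$ excludes the former. The latter is also impossible, because on any $[\varepsilon, t_{max}]$ with $\varepsilon > 0$ the ODE and the lower bound $f \ge C\varepsilon^{1/3}$ yield a uniform bound on $|f'|$, forcing $f$ to remain bounded on this interval. Hence $t_{max} = +\infty$, giving case (A).

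\textbf{Case B: there is a smallest $t_1 \in (0, t_{max})$ with $f(t_1) = g(t_1)$.} At $t_1$ one has $f'(t_1) = 0$ while $g'(t_1) = \tfrac{C}{3} t_1^{-2/3} > 0$, so $(f-g)'(t_1) < 0$; combined with $(f-g)(t_1) = 0$, this forces $f < g$ immediately after $t_1$. Once below $g$, the relations $f' < 0$ and $g' > 0$ make $g - f$ strictly increasing, so $f$ can never climb back to $g$. Consequently $f$ is strictly decreasing on $[t_1, t_{max})$, bounded above by $f(t_1)$, and therefore bounded on all of $[0, t_{max})$ by some constant $M$. Plugging $f \le M$ into the integral form (\ref{equ_int}) yields $f(t) \le \st + a t^{1/3} - \tfrac{\pi^2\sigma^2}{2M^2}\, t$, and the right-hand side becomes negative for $t$ large, contradicting $f > 0$. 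Hence $t_{max} < +\infty$; by the remark, $f \to 0$ or $+\infty$, and the bound $f \le M$ leaves only $f \to 0$, which is case (B).

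The only delicate point is the transversality of $f$ and $g$ at a would-be tangency, handled by the strict inequality $f'(t_1) = 0 < g'(t_1)$; once this is settled, everything reduces to a sign analysis and one contradiction from the integral form.
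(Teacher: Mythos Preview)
Your proof is correct and takes a somewhat different route from the paper's. The paper argues directly: first, the trivial bound $f(t) \le \st + at^{1/3}$ (from dropping the nonnegative integral in (\ref{equ_int})) rules out $f \to +\infty$ at a finite $t_{max}$; second, assuming $t_{max} = +\infty$ but $f \not\to +\infty$, one picks some $t_n$ large with $f(t_n) \le M$ and $\tfrac{a}{3}t_n^{-2/3} < \tfrac{\pi^2\sigma^2}{2M^2}$, and then checks that $f'(t) \le \tfrac{a}{3}t_n^{-2/3} - \tfrac{\pi^2\sigma^2}{2M^2} < 0$ for all $t \ge t_n$, forcing $f$ to hit zero in finite time --- a contradiction. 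Your argument instead organizes everything around the null isocline $g(t) = Ct^{1/3}$ and splits on whether $f$ ever meets it; this gives a cleaner phase-plane picture and yields as a by-product that $f$ is monotone (increasing in your Case~A, eventually decreasing in your Case~B). The core contradiction in your Case~B --- boundedness of $f$ plus the integral form (\ref{equ_int}) producing a linearly decreasing upper bound --- is exactly the paper's second step, just phrased integrally rather than via $f'$. Conversely, the paper's one-line bound $f \le \st + at^{1/3}$ disposes of the blow-up possibility more quickly than your bounded-derivative argument in Case~A.
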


\begin{proof}
First notice that for any $0<t<t_{max}$, $f(t)\le \st + a t^{1/3}$.
A consequence of this inequality is that if $t_{max}<+\infty$, then the limit of $f$ when $t$ goes to $t_{max}$ can only be $0$.

Now, suppose that $t_{max}=+\infty$ but that $f$ does not go to infinity. Then there are $M>0$ and a sequence $(t_n)_n\ge1$ with $\lim_n t_n=+\infty$ such that for any $n\ge 1$, $f(t_n)\le M$.
We can choose $n$ such that $\frac{a}{3} t_n^{-2/3} - \frac{\pi^2\sigma^2}{2M^2}<0$.

Then it is easy to see that $f$ decreases after $t_n$.
Indeed, for $t\ge t_n$,
\begin{equation*}
f'(t)=\frac{a}{3} t^{-2/3} - \frac{\pi^2\sigma^2}{2f(t)^2}\le \frac{a}{3} t_n^{-2/3} - \frac{\pi^2\sigma^2}{2M^2}<0.
\end{equation*}
To be rigorous, we must consider $t_*:=\inf\left\{t\ge t_n, f'(t)>\frac{a}{3} t_n^{-2/3} - \frac{\pi^2\sigma^2}{2M^2}\right\}$ and notice that if we assume $t_*<+\infty$, then $f$ decreases in a neighborhood of $t_*$ and the inequality still holds on this neighborhood, which contradicts the definition of $t_*$.

We have proved that $f'(t)$ is less than a negative constant for $t\ge t_n$, which implies that $f$ reaches zero in finite time.
\end{proof}

Assume we are in the second case of Proposition \ref{disjonction}.
We set $\lambda:=t_{max}^{-1}$ and define the function $f_\lambda$ as in Proposition \ref{inv_prop} (with $t_0=t_{max}$).
We choose $g=f_\lambda$ and set $g(1)=0$ so that $g$ is continuous over $[0,1]$ and satifies (\ref{equ_diff}) for all $t\in(0,1)$.

\begin{remark}
A consequence of Proposition \ref{inv_prop} is that the choice of the value $\st$ of $f(0)$ does not matter at all. If we replace $\st>0$ with another $\widetilde{\st}>0$, we then replace $\lambda$ with $\widetilde{\lambda}=\lambda \left(\frac{\widetilde{\st}}{\st}\right)^3$ and finally get the same $g$.
\end{remark}

So we only have to prove that, when $a<a_c$, we are in case (B) of Proposition \ref{disjonction}, and we will deduce the upper bound in Theorem \ref{result}. This is contained in the following:
\begin{proposition}\label{study_equ_diff}
Let $f$ be the solution of equation (\ref{equ_diff}) with initial condition $f(0)=1$.
\renewcommand{\theenumi}{\roman{enumi}}
\renewcommand{\labelenumi}{(\theenumi)}
\begin{enumerate}
\item If $a> a_c$, then $t_{max}=+\infty$ and $f(t)\sim bt^{1/3}$ as $t\rightarrow +\infty$ with $b$ defined by $b>\frac{2 a_c}{3}$ and $a=b+\frac{3\pi^2\sigma^2}{2 b^2}$.
\item If $a= a_c$, then $t_{max}=+\infty$ and $f(t)\sim \frac{2 a_c}{3}t^{1/3}$ as $t\rightarrow +\infty$.
\item If $a< a_c$, then $t_{max}<+\infty$ and $f(t)\rightarrow 0$ as $t\rightarrow t_{max}$.
\end{enumerate}
\end{proposition}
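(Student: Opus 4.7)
The plan is to introduce the self-similar change of variables $u = t^{1/3}$ and $g(u) = f(u^3)/u$, which recasts equation (\ref{equ_diff}) as
\begin{equation*}
u\, g'(u) = F(g(u)), \qquad F(g):=a-g-\frac{3\pi^2\sigma^2}{2 g^2}.
\end{equation*}
The initial condition $f(0)=1$ translates into $g(u)\sim 1/u\to+\infty$ as $u\to 0^+$, and the power-law solutions $f(t)=b t^{1/3}$ correspond precisely to the constant solutions $g\equiv b$, i.e.\ to the zeros of $F$. With this substitution, the three cases of the proposition reduce to statements about $g(u)$ as $u$ grows.

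A direct computation gives $F'(g)=-1+3\pi^2\sigma^2/g^3$ and $F''(g)<0$, so $F$ is strictly concave on $(0,\infty)$ and attains its maximum at $g_\star:=(3\pi^2\sigma^2)^{1/3}=\tfrac{2a_c}{3}$ with $F(g_\star)=a-a_c$. Thus: if $a>a_c$, $F$ has two positive zeros $b_-<g_\star<b_+$ with $F>0$ exactly on $(b_-,b_+)$; if $a=a_c$, $F$ has the unique double zero $g_\star$ and $F\le 0$; and if $a<a_c$, $F\le a-a_c<0$ everywhere.

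For cases (i) and (ii) I would show that $g(u)$ decreases monotonically and converges to $b_+$, respectively $g_\star$. The singular initial datum forces $g$ to lie, for small $u$, in the region $\{F<0\}$ above the largest zero; by local uniqueness on $(0,\infty)$ the constant solution $g\equiv b_+$ (resp.\ $g\equiv g_\star$) cannot be crossed, so $g$ stays above it, and being monotone and bounded below it has a limit $L\ge b_+$. The bound $g'(u)\le -\varepsilon/u$ valid whenever $g\ge b_++\delta$ would integrate to a logarithmic divergence and contradict boundedness, forcing $L=b_+$. Hence $t_{\max}=+\infty$ and $f(t)=u\,g(u)\sim b_+ t^{1/3}$ (resp.\ $\tfrac{2a_c}{3}\,t^{1/3}$).

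Case (iii) is where the key extinction bound lives. Since $F(g)\le a-a_c<0$ uniformly, $u\,g'\le a-a_c$ integrates to $g(u)\le g(u_0)+(a-a_c)\log(u/u_0)$, forcing $g$ to reach $0$ at some finite $u_1$; as $g\downarrow 0$ the term $-3\pi^2\sigma^2/(2g^2)$ dominates and guarantees a genuine blow-down at $u_1$. Translating back via $t=u^3$ yields $t_{\max}=u_1^3<+\infty$ and $f(t)=u\,g(u)\to 0$. The most delicate step will be the marginal case $a=a_c$: the linearization at the double zero vanishes, so one must work with $F(g)\approx\tfrac12 F''(g_\star)(g-g_\star)^2$ to obtain $g-g_\star\sim -2/(F''(g_\star)\log u)$, which is exactly the logarithmic rate needed for $f(t)\sim\tfrac{2a_c}{3}\,t^{1/3}$.
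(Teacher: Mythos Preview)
Your argument is correct and takes a genuinely different route from the paper.

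The paper works directly with $f$. For (i) it compares $f$ with the explicit solution $f_0(t)=bt^{1/3}$, sets $h=f-f_0$, and applies Gronwall's lemma to obtain $h(t)\le C\,t^{\pi^2\sigma^2/b^3}$; since $b>\tfrac{2a_c}{3}$ forces the exponent below $\tfrac13$, this gives $f\sim bt^{1/3}$. For (ii) the Gronwall exponent equals $\tfrac13$, yielding only $f(t)\le b_0 t^{1/3}$, and the sharp constant is recovered by iterating a bootstrap lemma (the map $b\mapsto a-\tfrac{3\pi^2\sigma^2}{2b^2}$). For (iii) the paper argues by contradiction: assuming $t_{\max}=+\infty$, it uses a monotonicity-in-$a$ comparison to import the bound from (ii) and then runs the same iteration until the sequence $b_n$ diverges to $-\infty$.

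Your substitution $g(u)=f(u^3)/u$ exploits the scaling invariance (the paper's Proposition~\ref{inv_prop}) to convert the non-autonomous ODE into $u\,g'=F(g)$, after which everything reduces to an elementary phase-line analysis of the concave function $F$. This bypasses Gronwall, the bootstrap iteration, and the monotonicity-in-$a$ comparison altogether; in particular (iii) becomes a one-line integration of the uniform bound $F\le a-a_c<0$. One small remark: the logarithmic refinement $g-g_\star\sim c/\log u$ you flag as ``the most delicate step'' in (ii) is more than the proposition asks for. The bare convergence $g(u)\to g_\star$, obtained by exactly the same barrier-plus-monotone-limit argument as in (i), already yields $f(t)\sim\tfrac{2a_c}{3}t^{1/3}$, so case (ii) is no harder than case (i) in your framework.
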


In the proof of the proposition, we will need the following lemma:
\begin{lemma}\label{ameliore_b}
Assume that $f$ is a solution on $[0, +\infty)$ of the differential equation and that:
\begin{equation*}
\limsup_{t\to +\infty} \frac{f(t)}{t^{1/3}}\le b.
\end{equation*}
Then we have
\begin{equation*}
\limsup_{t\to +\infty} \frac{f(t)}{t^{1/3}}\le b':=a-\frac{3\pi^2\sigma^2}{2b^2}.
\end{equation*}
\end{lemma}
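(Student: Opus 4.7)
The plan is to pass from the differential form of the equation to its integral form (equation \ref{equ_int}), which reads
\begin{equation*}
f(t) = f(0) + a t^{1/3} - \frac{\pi^2 \sigma^2}{2}\int_0^t \frac{\d u}{f(u)^2}.
\end{equation*}
This identity converts any pointwise upper bound on $f$ into a \emph{lower} bound on the integral term, and hence, self-referentially, into a sharper upper bound on $f$.

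Fix $\varepsilon>0$. By the hypothesis $\limsup_{t\to\infty} f(t)/t^{1/3}\le b$, there exists $T=T(\varepsilon)$ such that $f(u)\le (b+\varepsilon)u^{1/3}$ for all $u\ge T$. Splitting the integral at $T$ and discarding the piece on $[0,T]$ (which only improves the inequality since we are about to subtract), I would bound
\begin{equation*}
\int_0^t \frac{\d u}{f(u)^2} \ge \int_T^t \frac{\d u}{(b+\varepsilon)^2 u^{2/3}} = \frac{3}{(b+\varepsilon)^2}\bigl(t^{1/3}-T^{1/3}\bigr).
\end{equation*}
Feeding this back into the integral equation gives, for $t\ge T$,
\begin{equation*}
f(t) \le f(0) + a t^{1/3} - \frac{3\pi^2 \sigma^2}{2(b+\varepsilon)^2}\bigl(t^{1/3}-T^{1/3}\bigr).
\end{equation*}

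Dividing by $t^{1/3}$ and letting $t\to\infty$ (the constants $f(0)$ and $T^{1/3}$ vanish after division), I obtain
\begin{equation*}
\limsup_{t\to\infty} \frac{f(t)}{t^{1/3}} \le a - \frac{3\pi^2\sigma^2}{2(b+\varepsilon)^2}.
\end{equation*}
Finally I let $\varepsilon\downarrow 0$ and conclude $\limsup_{t\to\infty} f(t)/t^{1/3}\le a-\frac{3\pi^2\sigma^2}{2b^2}=b'$, as desired.

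There is no serious obstacle: the only thing to check is that the integral $\int_0^T \d u/f(u)^2$ is finite so that discarding it is legitimate, which follows from continuity of $f$ and the fact that $f>0$ on $[0,+\infty)$ (otherwise the ODE would blow up, contradicting that $f$ is defined on all of $[0,+\infty)$). The substance of the lemma is entirely captured by the self-improving mechanism: an asymptotic upper bound $b$ on $f/t^{1/3}$ forces the friction term in the ODE to drain at rate at least $\frac{3\pi^2\sigma^2}{2b^2}\cdot t^{1/3}$, which cuts the effective growth coefficient from $a$ down to $a-\frac{3\pi^2\sigma^2}{2b^2}$.
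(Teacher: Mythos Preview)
Your proof is correct and follows essentially the same route as the paper: use the integral equation, bound $f(u)\le(b+\varepsilon)u^{1/3}$ for $u$ beyond some threshold, lower-bound the integral, divide by $t^{1/3}$, and let $\varepsilon\to 0$. You are in fact slightly more careful than the paper's version, which contains a typo (it writes $f(t)\ge c_0+\cdots$ where $\le$ is meant).
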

\begin{proof}[Proof of Lemma \ref{ameliore_b}]
Let $\varepsilon>0$. By hypothesis, for any $t$ greater than some $t_0$, we have $f(t)\le  (b+\varepsilon) t^{1/3}$.
For some real constants $c_0$ and $c'_0$ and any $t\ge t_0$, we have, by equation (\ref{equ_int}):
\begin{equation*}
f(t)\ge c_0 + a t^{1/3} - \frac{\pi^2\sigma^2}{2 (b+\varepsilon)^2}\int_{t_0}^t \frac{\d u}{u^{2/3}}=c'_0 +\left(a-\frac{3\pi^2\sigma^2}{2(b+\varepsilon)^2}\right)t^{1/3}.\end{equation*}
Hence
\begin{equation*}\limsup_n \frac{f(t)}{t^{1/3}}\le \left(a-\frac{3\pi^2\sigma^2}{2(b+\varepsilon)^2}\right).\end{equation*}
Letting $\varepsilon$ tend to $0$ ends the proof of the lemma.\end{proof}
Iterating Lemma \ref{ameliore_b}, we obtain:
\begin{lemma}\label{ameliore_b_itere}
Assume that $f$ is a solution on $[0, +\infty)$ of the differential equation and that for all $\limsup_{t\to +\infty} \frac{f(t)}{t^{1/3}}\le b_0$.
We define the sequence $(b_n)_{n\in\bN}$ recursively by $b_{n+1}:=a-\frac{3\pi^2\sigma^2}{2b_n^2}$.
Then
\begin{equation*}
\forall n\ge1, \limsup_{t\to +\infty} \frac{f(t)}{t^{1/3}}\le b_n.
\end{equation*}
\end{lemma}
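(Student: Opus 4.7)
The plan is an immediate induction on $n$, using Lemma \ref{ameliore_b} as both the base case and the inductive step. I do not expect any real obstacle: the statement is deliberately set up so that Lemma \ref{ameliore_b} is the engine and Lemma \ref{ameliore_b_itere} is just its iterate. The value of stating it separately is that in the proof of Proposition \ref{study_equ_diff}(i) one then only has to study the recursion $b_{n+1} = a - \tfrac{3\pi^2\sigma^2}{2 b_n^2}$, which is a one-dimensional dynamical system whose fixed points are the roots of $a = b + \tfrac{3\pi^2\sigma^2}{2 b^2}$.

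For the base case $n = 1$: the hypothesis $\limsup_{t\to+\infty} f(t)/t^{1/3} \le b_0$ is exactly the hypothesis of Lemma \ref{ameliore_b} with $b = b_0$, and its conclusion reads $\limsup_{t\to+\infty} f(t)/t^{1/3} \le a - \tfrac{3\pi^2\sigma^2}{2 b_0^2}$, which is by definition $b_1$. For the inductive step, assuming $\limsup_{t\to+\infty} f(t)/t^{1/3} \le b_n$ has already been established, I apply Lemma \ref{ameliore_b} a second time, now with $b = b_n$, and read off $\limsup_{t\to+\infty} f(t)/t^{1/3} \le a - \tfrac{3\pi^2\sigma^2}{2 b_n^2} = b_{n+1}$. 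This closes the induction and yields the claim for all $n \ge 1$.

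The one mild subtlety worth a sentence is that Lemma \ref{ameliore_b} is meaningful only when the bound $b$ being plugged in is nonnegative (its proof uses $(b+\varepsilon)^2$ in the denominator and requires $b + \varepsilon > 0$ for small $\varepsilon > 0$). This is not a genuine issue in our induction: since $f$ takes positive values on $[0,+\infty)$, any upper bound on $\limsup_{t\to+\infty} f(t)/t^{1/3}$ delivered by the previous step must itself be $\ge 0$, so $b_n \ge 0$ is automatic at every stage of the recursion and the hypotheses of Lemma \ref{ameliore_b} are satisfied throughout. Consequently no additional argument is required beyond the two-line induction above.
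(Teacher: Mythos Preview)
Your proof is correct and is precisely the paper's approach: the paper states the lemma with the single sentence ``Iterating Lemma \ref{ameliore_b}, we obtain:'' and gives no further argument, so your induction on $n$ using Lemma \ref{ameliore_b} at each step is exactly what is intended. Your added remark about the nonnegativity of $b_n$ (needed to keep invoking Lemma \ref{ameliore_b}) is a point the paper leaves implicit; it is correct and in fact is exactly the mechanism by which the contradiction in the proof of Proposition \ref{study_equ_diff}(iii) arises.
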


\begin{proof}[Proof of Proposition \ref{study_equ_diff}]

(i) Assume $a\ge a_c$ and let $b$ such that $a=b+\frac{3\pi^2\sigma^2}{2 b^2}$.
Define, for $0\le t\le t_{max}$, $f_0(t):=b t^{1/3}$.
Then $f_0$ satisfies equation (\ref{equ_diff}) as $f$ does, with initial condition $f_0(0)=0<f(0)=\st$. Hence
\begin{equation*}
\forall 0\le t\le t_{max}, f(t)\ge f_0(t).
\end{equation*}
This implies $t_{max}=+\infty$.
Now let $h=f-f_0$. Then, by equation (\ref{equ_int}), we have, for $t\ge 0$,
\begin{eqnarray*}
h(t)&=&\st + (a-b)t^{1/3}-\int_0^t\frac{\pi^2\sigma^2 \dd u}{2f(u)^2}\\
&=&\st + (a-b-\frac{3\pi^2\sigma^2}{2b^2})t^{1/3}+\int_0^t\frac{\pi^2\sigma^2\dd u}{2}\left(\frac{1}{f_0(u)^2}-\frac{1}{f(u)^2}\right).
\end{eqnarray*}
Since $a=b+\frac{3\pi^2\sigma^2}{2 b^2}$,
\begin{equation*}
h(t)=\int_0^t\frac{\pi^2\sigma^2}{2}\left(\frac{\d u}{f_0(u)^2}-\frac{1}{f(u)^2}\right)\le \int_0^t\frac{\pi^2\sigma^2}{2}\frac{2h(u)\dd u}{f_0(u)^3}.
\end{equation*}
We apply Gronwall's lemma and obtain, for any $0<t_0<t$,
\begin{equation}
h(t)\le h(t_0) \exp\left(\int_{t_0}^t\frac{\pi^2\sigma^2 \dd u}{b^3 u}\right)=h(t_0) (\frac{t}{t_0})^\frac{\pi^2\sigma^2}{b^3}.\label{glemma}
\end{equation}
Notice that $\frac{\pi^2\sigma^2}{b^3}=\frac{1}{3}\frac{a_c^3}{b^3}$.
Then if $a>a_c$ and $b>\frac{2a_c}{3}$, the exponent in the right-hand side of (\ref{glemma}) will be less than $\frac{1}{3}$.

 (ii) Now assume $a= a_c$ and $b=\frac{2a_c}{3}$. This is the same as when $a>a_c$, except that the exponent in the right-hand side of (\ref{glemma}) is exactly $\frac{1}{3}$, which means that for some constant $b_0>\frac{2a_c}{3}$,
\begin{equation*}
\forall t\ge t_0,\; f_0(t)\le f(t)\le b_0 t^{1/3}.
\end{equation*}
Now apply Lemma \ref{ameliore_b_itere}.
The result follows from that $\lim_n b_n=\frac{2a_c}{3}$.

(iii) Assume $a< a_c$ and $t_{max}=+\infty$. Then, by (ii) and Proposition \ref{crois_en_a}, we have that any $b_0>\frac{2a_c}{3}$, for $t$ large enough,
\begin{equation*}
\limsup_{t\to +\infty} \frac{f(t)}{t^{1/3}}\le b_0.
\end{equation*}
Now we apply Lemma \ref{ameliore_b_itere}.
If $b_0$ is close enough to $\frac{2a_c}{3}$, we will have $b_1<\frac{2a_c}{3}$ and $b_n\rightarrow -\infty$ as $n$ goes to infinity, which is absurd.
We conclude that the hypothesis $t_{max}=+\infty$ is false, which proves the proposition.
\end{proof}

\section{Lower bound for the survival probability}\label{section_survivors}

\subsection{Strategy of the estimate}

The basic idea is to consider only the population between two barriers (below $i\mapsto a i^{1/3}$ but above $i\mapsto (a-b) i^{1/3}$), estimate the first two moments of the number of individuals in generation $n$ and then to use the Paley-Zygmund inequality to get the lower bound.

Unfortunately, Mogul'skii's estimate causes the apparition of a factor $\ee^{o(n^{1/3})}$ in the estimates of the moments of the surviving population at generation $n$, so we will not be able to prove directly that the population survives with positive probability.

Here is how to overcome this difficulty:

Set $\lambda>0$ such that $\ee^{\lambda}\in \bN$ and $(v_k)_{k\ge 1}$ a sequence of positive integers.
We consider the population surviving below the barrier $i\mapsto a i^{1/3}$: any individual that would be born above this barrier is removed and consequently does not reproduce.
For any $k \in \bN$, we pick a single individual $z$ at position $V(z)$ in generation $\ee^{\lambda k}$ and consider the number $Y_k(z)$ of descendants she eventually has in generation $\ee^{\lambda (k+1)}$.

We get a lower bound for $Y_k(z)$ by considering, instead of $z$, a virtual individual $\widetilde{z}$ in the same generation $\ee^{\lambda k}$ but positionned on the barrier at $\widetilde{V}(z):=a \ee^{\lambda k/3}\ge V(z)$. The number and displacements of the descendants of $\widetilde{z}$ are exactly the same as those of $z$. Obviously, for any $u>z$, $V(\widetilde{u})=V(u)+a \ee^{\lambda k/3}-V(z)\ge V(u)$. Hence the descendants of $\widetilde{z}$ are more likely to cross the barrier and be killed, which means that $Y_k(\widetilde{z})\le Y_k(z)$.

In order to apply Mogul'skii's estimate, we add a second absorbing barrier $i\mapsto (a-b) i^{1/3}$ for some $b>0$ and kill any descendant of $\widetilde{z}$ that is born below it. This way, we obtain that, almost surely, $Z_k\le Y_k(\widetilde{z})\le Y_k(z)$, where
\begin{equation*}
Z_k:=\#\left\{ u \in \cT_{\ee^{\lambda n}}:u>z, \forall \ee^{\lambda k} < i\le \ee^{\lambda (k+1)}, (a-b)i^{1/3}\le V(\widetilde{u_i})\le a i^{1/3}\right\}.
\end{equation*}
It is clear that $Z_k$ depends on $z$ but its law and in particular $A_k:=\p (Z_k\ge v_k)\le \p(Y_k(z)\ge v_k)$ do not.

We define, for any $n\ge1$:
\begin{equation*}
\mathcal{P}_{n}:= \p \left(\forall 1\le k\le n, \# \!\left\{ u \in \cT_\ee^{\lambda k}: \forall i\le \ee^{\lambda k}, V(u_i)\le a i^{1/3}\right\}\ge v_{k-1} \right).
\end{equation*}
If $1\le n_0\le n$, then we have:
\begin{equation*}
\mathcal{P}_{n+1}\ge\mathcal{P}_n \left(1-(1-A_n)^{v_{n-1}}\right).
\end{equation*}
By induction, we obtain:
\begin{equation*}
\mathcal{P}_{n}\ge\mathcal{P}_{n_0} \prod_{k=n_0}^{n-1}\left(1-(1-A_k)^{v_{k-1}}\right)\ge\mathcal{P}_{n_0} \prod_{k=n_0}^n\left(1-\ee^{-v_{k-1} A_k}\right).
\end{equation*}

This makes Proposition \ref{survivors_number} a consequence of the following lemma:
\begin{lemma}
If $a>a_c$, and $\lambda$ is large enough and such that $\ee^{\lambda}\in \bN$, then
\begin{equation}\label{cond_survie}
\sum_{k=0}^\infty \ee^{-v_k A_{k+1}}<+\infty.
\end{equation}
\end{lemma}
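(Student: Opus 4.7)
\emph{Approach.} I will apply the Paley--Zygmund inequality to $Z_{k+1}$ for a well-chosen threshold $v_{k+1}$, obtaining a uniform-in-$k$ lower bound on $A_{k+1}$, and combine this with a doubly exponential growth of $v_k$ to make the series trivially convergent. The heart of the argument is thus a first and second moment estimate for $Z_k$ via the many-to-one and many-to-two lemmas, together with Mogul'skii's estimate. For the first moment, apply Lemma \ref{afo} shifted so that the virtual ancestor $\widetilde{z}$ is at the origin in time and space; with $n_k := \ee^{\lambda(k+1)} - \ee^{\lambda k}$ and $\rho := (\ee^\lambda - 1)^{-1}$, the barrier conditions on the shifted walk rescale, in the variable $\tau = i/n_k \in [0,1]$, to
\begin{equation*}
g_1(\tau) = (a-b)(\rho + \tau)^{1/3} - a \rho^{1/3}, \qquad g_2(\tau) = a\bigl[(\rho + \tau)^{1/3} - \rho^{1/3}\bigr],
\end{equation*}
for which $g_2(\tau) - g_1(\tau) = b(\rho + \tau)^{1/3}$. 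Localizing $S_{n_k}/n_k^{1/3}$ near $g_2(1)$ via Corollary \ref{mse_local} and using the identity $a = b + 3\pi^2\sigma^2/(2b^2)$ together with $\int_0^1 (\rho+\tau)^{-2/3}\dd\tau = 3[(1+\rho)^{1/3} - \rho^{1/3}]$, the exponent in the resulting lower bound collapses to $b \cdot n_k^{1/3}[(1+\rho)^{1/3} - \rho^{1/3}] = b(\ee^{\lambda/3} - 1)\,\ee^{\lambda k/3}$. Hence for any $\eta > 0$ and $k$ large,
\begin{equation*}
\e[Z_k] \geq \exp\bigl((b-\eta)(\ee^{\lambda/3} - 1)\, \ee^{\lambda k/3}\bigr).
\end{equation*}

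\emph{Second moment and Paley--Zygmund.} For the second moment I decompose $Z_k(Z_k-1)$ along the generation $j$ of the most recent common ancestor of an ordered pair of good descendants. A standard many-to-two computation, combining Lemma \ref{afo} on the common stem up to generation $j$ with Corollary \ref{mse_ub} applied independently to each of the two lineages issued from the split, produces a bound of the form
\begin{equation*}
\e[Z_k^2] \leq C(a, b, \lambda)\, \e[Z_k]^2
\end{equation*}
uniformly in $k$, as soon as $\lambda$ is taken large enough that the $\ee^{o(n_k^{1/3})}$ corrections coming from Mogul'skii's estimates are dominated by the main exponential factors. Paley--Zygmund then yields $\p(Z_k \geq \tfrac{1}{2}\e[Z_k]) \geq 1/(4C) =: c_0 > 0$.

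\emph{Conclusion.} Set $v_k := \lfloor \tfrac{1}{2} \e[Z_k] \rfloor$; this is consistent with the lower bound $\exp((b_a - \varepsilon)\, \ee^{\lambda k/3})$ demanded by Proposition \ref{survivors_number}, as soon as $\lambda$ is large enough that $(b - \eta)(\ee^{\lambda/3} - 1) \geq b_a - \varepsilon$ (recall $b = b_a$). Then $A_k \geq c_0$ uniformly in $k$, hence
\begin{equation*}
v_k A_{k+1} \geq c_0 v_k \geq \tfrac{c_0}{3} \exp\bigl((b - \eta)(\ee^{\lambda/3} - 1)\, \ee^{\lambda k/3}\bigr)
\end{equation*}
grows doubly exponentially in $k$, so that the series $\sum_k \ee^{-v_k A_{k+1}}$ converges trivially.

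\emph{Main obstacle.} The second-moment bound. One must carry out the many-to-two decomposition while tracking the delicate cancellation $a - 3\pi^2 \sigma^2/(2 b^2) = b$ across both branches issued from the most recent common ancestor, and show that the contribution of early splits is exponentially suppressed, uniformly in $k$. The largeness of $\lambda$ enters precisely to absorb the subexponential Mogul'skii errors on the scale of the leading exponential term.
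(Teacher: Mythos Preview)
Your overall architecture matches the paper's: Paley--Zygmund applied to $Z_k$, first moment via Lemma~\ref{afo} and Corollary~\ref{mse_local}, second moment split along the generation of the most recent common ancestor, and the choice $g(t)=b(\rho+t)^{1/3}$. The first-moment computation is correct.

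The gap is in the second-moment step. Your claim that $\e[Z_k^2]\le C\,(\e[Z_k])^2$ uniformly in $k$ does not follow from the many-to-two decomposition plus Mogul'skii, and with your choice $b=b_a$ (equality in $a=b+\tfrac{3\pi^2\sigma^2}{2b^2}$) it is in fact false at the level of these estimates. Carrying out the computation as in Section~\ref{section_survivors}, one finds for the split at height $\alpha\in[0,1]$ that
\[
\frac{1}{\ell_k^{1/3}}\log\frac{B_{k,\alpha\ell_k}}{(\e[Z_k])^2}\ \le\ g_2(0)-g_2(\alpha)+g(\alpha)+\frac{\pi^2\sigma^2}{2}\int_0^\alpha\frac{\d t}{g(t)^2}+O(\varepsilon)
\ =\ b\,f(0)+\Bigl(b+\tfrac{3\pi^2\sigma^2}{2b^2}-a\Bigr)f(\alpha)+O(\varepsilon).
\]
With $b=b_a$ the $f(\alpha)$ term vanishes identically, so every $\alpha$ contributes at the same leading order and the bound reads $\e[Z_k^2]/(\e[Z_k])^2\lesssim \ell_k\,\exp\!\bigl(b\,\ell_k^{1/3}f(0)\bigr)=\ell_k\,\exp(b\,\ee^{\lambda k/3})$. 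Hence $A_{k+1}$ is only bounded below by $\exp(-b\,\ee^{\lambda(k+1)/3})$, and since $v_k\approx\exp\bigl(b(\ee^{\lambda/3}-1)\ee^{\lambda k/3}\bigr)=\exp\bigl(b\,\ee^{\lambda(k+1)/3}-b\,\ee^{\lambda k/3}\bigr)$, the product $v_kA_{k+1}$ actually tends to $0$. Your series diverges. The ``exponential suppression of early splits'' you invoke simply does not occur when $b=b_a$: the coefficient of $f(\alpha)$ is zero.

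The paper's remedy is exactly the two choices you did not make. First, it takes $b$ with \emph{strict} inequality $b+\tfrac{3\pi^2\sigma^2}{2b^2}<a$, which buys a negative coefficient $-\delta$ in front of $f(\alpha)$ but still leaves the worst case at $\alpha=0$ with value $b\,f(0)$. Second, and crucially, it never claims $A_k$ is bounded below; instead it compares $\e[Z_k^2]$ directly to $(\e[Z_k])^2\,v_{k-1}$ and shows this ratio goes to zero. The extra factor $v_{k-1}\approx\exp\bigl((b+\delta)(1-\ee^{-\lambda/3})\ee^{\lambda k/3}\bigr)$ beats the $\exp(b\,\ee^{\lambda k/3})$ loss precisely when $\lambda$ is large enough that $(b+\delta)(1-\ee^{-\lambda/3})>b$, i.e.\ $\ee^{\lambda/3}>1+b/\delta$. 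That is the true role of the largeness of $\lambda$: it is not there to absorb $o(n_k^{1/3})$ Mogul'skii errors (those are handled by $\varepsilon$), but to make the population at stage $k-1$ outgrow the second-moment deterioration at stage $k$.
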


Fix $\theta\in(0,1)$, for example $\theta=\frac{1}{2}$. The Paley-Zygmund inequality, with $v_k:=\theta \e[Z_k]$ gives
\begin{equation}\label{pz}
A_k \ge (1-\theta)^2\frac{\left(\e[Z_k]\right)^2}{\e[Z_k^2]}.
\end{equation}

\subsection{Estimate of $\e[Z_k]$}

 We set $k \ge 0$ and consider only the descendance of an initial ancestor $u^0$ starting at time $\ee^{\lambda k}$ at position $a \ee^{\lambda k/3}$ over $\ell_k:=\ee ^{\lambda (k+1)}-\ee ^{\lambda k}$ generations. The individuals of generation $i$ are killed and have no descendance if they are out of the interval:
\begin{equation*}
 I_i:= [(a-b) i^{1/3}, a i^{1/3}].
\end{equation*}

With $(S_i)_{0\le i\le \ell_k}$ the zero-mean random walk starting from $0$ defined in Lemma \ref{afo},
\begin{eqnarray}
\e [Z_k] & = & \e \left[\sum_{u>u^0,|u|=\ee^{\lambda (k+1)}} \ind_{\left\{\forall \ee^{\lambda k} < i\le \ee^{\lambda (k+1)}, V(u_i)\in I_i \right\}}\right]\nonumber\\
& = & \e \left[\ee^{S_{\ell_k}} \ind_{\left\{\forall i\le \ell_k, S_i\in I_{\ee^{\lambda k}+i}\right\}}\right]\label{afo_ub}\\
& \ge & \ee^{a \ee ^{\lambda (k+1)/3} -b_{\ee ^{\lambda (k+1)}}} \p \left(\forall i\le \ell_k, S_i\in I_{\ee^{\lambda k}+i}\right).\label{lb_e_z_k_un}
\end{eqnarray}

Define for some continuous function $g:[0,1] \mapsto (0,+\infty)$ and $0\le i \le \ell_k$:
\begin{equation}\label{how_to_apply_mse}
g_2(t):=a\left(t+\frac{\ee^{\lambda k}}{\ell_k}\right)^{1/3},g(t):=b\left(t+\frac{\ee^{\lambda k}}{\ell_k}\right)^{1/3}, g_1(t):=g_2(t)-g(t).
\end{equation}
Then, the lower bound in Corrolary \ref{mse_local} implies that $\forall \varepsilon>0$, $\exists k_0\ge 1$, $\forall k\ge k_0$,
\begin{equation}
\e [Z_k] \ge \exp \left(\ell_k^{1/3}\left[g_2(1)- \frac{\pi^2\sigma^2}{2}\int_0^1 \frac{\d t}{g(t)^2}-\varepsilon\right]\right).\label{e_z_k}
\end{equation}

\subsection{Estimate of $\e[Z_k^2]$}
We split the double sum over $u,v\in \cT$ according to the generation $j$ of $u_j=u\wedge v \in \cT$ the lowest common ancestor of $u$ and $v$:
\begin{equation}\label{mom_deux_sum_B_k,j}
\e [Z_k^2] = \e \left[\sum_{\stackrel{u>z,v>z}{|u|=|v|=\ee ^{\lambda (k+1)}}}\ind_{\left\{\forall \ee^{\lambda k} < i\le \ee^{\lambda (k+1)}, V(u_i)\in I_i, V(v_i)\in I_i\right\}} \right]=  \sum_{j=0}^{\ell_k} B_{k,j},
 \end{equation}
where
\begin{equation}\label{B_k,j_egal}
B_{k,j}:=\e \left[\sum _{u>z,|u|=\ee ^{\lambda (k+1)}} \ind_{\left\{\forall \ee^{\lambda k} < i\le \ee^{\lambda (k+1)}, V(u_i)\in I_i\right\}} \sum _{v>u_j,|v|=\ee ^{\lambda (k+1)}} \ind_{\left\{\forall \ee^{\lambda k}+j < i\le \ee^{\lambda (k+1)}, V(v_i)\in I_i\right\}}\right].
\end{equation}
Thanks to Lemma \ref{afo}, we have:
\begin{eqnarray}
h_{k,j}(x)&:=&\e \left[\sum _{v>u_j,|v|=\ee ^{\lambda (k+1)}} \ind_{\left\{\forall \ee^{\lambda k}+j < i\le \ee^{\lambda (k+1)}, V(v_i)\in I_i\right\}} \middle|V(z)=x\right]\nonumber\\
& = & \e \left[\ee^{S_{\ell_k-j}} \ind_{\left\{\forall 0 < i\le \ell_k - j, x+S_i\in I_{\ee ^{\lambda k}+i}\right\}}\right]\nonumber\\
& \le & \ee^{a \left(\ee^{\lambda (k+1)/3}-(\ee^{\lambda k}+j)^{1/3}\right)+b_{\ee^{\lambda k}+j}} \p\left(\forall 0 < i\le \ell_k - j, x+S_i\in I_{\ee ^{\lambda k+1}+i}\right).\label{h_kj}
\end{eqnarray}
Note that by equation (\ref{B_k,j_egal}),
\begin{equation*}
B_{k,j} \le \sup_{x\in I_{\ee ^{\lambda k}+j}} h_{k,j}(x) \e \left[ Z_k\right].
\end{equation*}
We proceed to estimate
\begin{equation*}
\sup_{x\in I_{\ee ^{\lambda k}+j}} \p\left(\forall 0 < i\le \ell_k - j, x+S_i\in I_{\ee ^{\lambda k+1}+i}\right).
\end{equation*}
Because Theorem \ref{mse} can only be applied with a specified initial position, we will not apply it directly with the functions specified in equations (\ref{how_to_apply_mse}) but we keep $b_{\ee ^{\lambda k}+i}:=\ell_k^{1/3}g(\frac{i}{\ell_k})$ for the same continuous $g: [0,1]\mapsto (0,+\infty)$ as previously in this section and $g_2(t):=a\left(t+\frac{\ee^{\lambda k}}{\ell_k}\right)^{1/3}=a\left(t+\frac{1}{\ee^\lambda -1}\right)^{1/3}$. The upper bound (\ref{h_kj}) gives:
\begin{equation}
h_{k,j}(x)\le \ee^{\ell_k^{1/3}\left(g_2(1) - g_2(\frac{j}{\ell_k}) + g(\frac{j}{\ell_k}) \right)} \p\left(\forall 0 < i\le \ell_k - j, x+S_i\in I_{\ee ^{\lambda k+1}+i}\right).\label{h_kj_g}
\end{equation}

For any $\varepsilon>0$ we set $M\in\bN$. We then split the interval $I_{\ee ^{\lambda k}+j}=\ell_k^{1/3} [g_2(\frac{j}{\ell_k})-g(\frac{j}{l_k}),g_2(\frac{j}{\ell_k})]$ into $M$ intervals $J_{k,j}^m:=\ell_k^{1/3} [g_2(\frac{j}{\ell_k})-\frac{m+1}{M}g(\frac{j}{\ell_k}),g_2(\frac{j}{\ell_k})-\frac{m}{M}g(\frac{j}{\ell_k})], 0\le m < M$.

For $y\in \frac{1}{\ell_k^{1/3}}J_{k,j}^m=[g_2(\frac{j}{\ell_k})-\frac{m+1}{M}g(\frac{j}{\ell_k}), g_2(\frac{j}{\ell_k})-\frac{m}{M}g(\frac{j}{\ell_k})]$,
\begin{eqnarray}
&&\p\left(\forall 0 < i\le \ell_k - j,\; \ell_k^{1/3}y +S_i\in I_{\ee ^{\lambda k}+i}\right)\nonumber\\
&=&\p\left(\forall 0 < i\le \ell_k - j,\; g_2(\frac{j}{\ell_k})-g(\frac{j}{\ell_k})\le y+\frac{S_i}{\ell_k}\le g_2(\frac{j}{\ell_k})\right)\nonumber\\
&\le & \p\left(\forall 0 < i\le \ell_k - j,\; \frac{m+1-M}{M}g(\frac{j}{\ell_k})\le\frac{S_i}{\ell_k}\le \frac{m}{M}g(\frac{j}{\ell_k})\right).\label{ub_p_Z^deux}
\end{eqnarray}

Now let $j=:\ell_k \alpha$, for a fixed $\alpha \in (0,1)$ such that $\ee^{\lambda k^0}\alpha \in \bN$ for some $k^0\in \bN$, and apply Theorem \ref{mse} using a trick similar to the one we used when we estimated $R_j(n)$ in the preceding section, by replacing any boundary function $f$ by the corresponding $f^\alpha$ defined by $f^\alpha(t):=(1-\alpha)^{-1/3}g(\alpha + (1-\alpha) t)$. We use in particular the equality
\begin{equation*}
(1-\alpha)^{1/3} \int_0^1 \frac{\d t}{g^\alpha(t)^2}=\int_\alpha^1 \frac{\d t}{g(t)^2}.
\end{equation*}
This way, we obtain that, for any $0\le m \le M$, as $k\rightarrow \infty$:
\begin{equation*}
\frac{1}{\ell_k^{1/3}}\log \p\left(\forall 0 < i\le \ell_k - j, \frac{m+1-M}{M}g(\frac{j}{\ell_k})\le\frac{S_i}{\ell_k}\le \frac{m}{M}g(\frac{j}{\ell_k})\right)\rightarrow -\frac{\pi^2 \sigma^2}{2}\frac{M^2}{(M-1)^2} \int_\alpha^1 \frac{\d t}{g(t)^2}.
\end{equation*}
Then for any $0\le m \le M$, we have for $k$ larger than some $k_\alpha \in \bN$:
\begin{equation*}
\frac{1}{\ell_k^{1/3}}\log \p\left(\forall 0 < i\le \ell_k - j, \frac{m+1-M}{M}g(\frac{j}{\ell_k})\le\frac{S_i}{\ell_k}\le \frac{m}{M}g(\frac{j}{\ell_k})\right)\le -\frac{\pi^2 \sigma^2}{2}\frac{M^2}{(M-1)^2} \int_\alpha^1 \frac{\d t}{g(t)^2}+\varepsilon.
\end{equation*}
Now we set the value $M$. We choose it large enough to have:
\begin{equation*}
\frac{\pi^2 \sigma^2}{2}\left(\frac{M^2}{(M-1)^2}-1\right)\int_\alpha^1 \frac{\d t}{g(t)^2}\le \varepsilon.
\end{equation*}
Plugging the last two inequalities into (\ref{ub_p_Z^deux}), we obtain for any $0\le m \le M$,
\begin{equation*}
\frac{1}{\ell_k^{1/3}}\log \sup_{x\in J_{k,j}^m} \p\left(\forall 0 < i\le \ell_k - j, x+S_i\in I_{\ee ^{\lambda k+1}+i}\right) \le -\frac{\pi^2 \sigma^2}{2}  \int_\alpha^1 \frac{\d t}{g(t)^2} + 2\varepsilon.
\end{equation*}
Since $I_{\ee ^{\lambda k}+j}= \bigcup_{0\le m <M}J_{k,j}^m$, this implies:
\begin{equation*}
\frac{1}{\ell_k^{1/3}}\log \sup_{x\in I_{\ee ^{\lambda k}+j}} \p\left(\forall 0 < i\le \ell_k - j, x+S_i\in I_{\ee ^{\lambda k+1}+i}\right) \le -\frac{\pi^2 \sigma^2}{2}  \int_\alpha^1 \frac{\d t}{g(t)^2} + 2\varepsilon.
\end{equation*}
Combining this with equation (\ref{e_z_k}) and inequality (\ref{h_kj_g}), we obtain for $k\ge \max(k_0,k_\alpha)$:
\begin{equation*}
\frac{1}{\ell_k^{1/3}}\log \frac{B_{k,j}}{\left(\e[Z_k]\right)^2}\le a \left[-(\ee^{\lambda k}+j)^{1/3}+\ee^{\lambda k/3}\right]+\ell_k^{1/3} \left[g(\alpha)+\frac{\pi^2 \sigma^2}{2}\int_0^\alpha \frac{\d t}{g(t)^2}+3\varepsilon\right].
\end{equation*}
The constant $k_\alpha$ depends on $\alpha$, but with an argument similar to the one we used in the previous section, (notice that in the case $\alpha =1$ the last inequality still holds, take some $N=\ee^{\lambda k^1}$ for some $k^1\in \bN$ and apply Theorem \ref{mse} $N-1$ times for $\alpha=\frac{n}{N}$, $0<n<N$), we obtain that for $k$ large enough, for any $0<\alpha=\frac{j}{\ell_k}\le 1$:
\begin{equation*}
\log \frac{B_{k,\alpha \ell_k}}{\left(\e[Z_k]\right)^2}\le \ell_k^{1/3} \left[g_2(0) -g_2(\alpha) + g(\alpha)+\frac{\pi^2 \sigma^2}{2}\int_0^\alpha \frac{\d t}{g(t)^2}+3\varepsilon\right].
\end{equation*}
With $\ell_k=\ee^{\lambda} \ell_{k-1}$, we get that for $k$ large enough, for any $0<\alpha=\frac{j}{\ell_k}\le 1$:
\begin{eqnarray*}
\frac{1}{\ell_k^{1/3}}\log \frac{B_{k,\alpha \ell_k}}{\left(\e[Z_k]\right)^2 v_k}&\le& \ell_k^{1/3} \left[g_2(0) -g_2(\alpha) + g(\alpha)+\frac{\pi^2 \sigma^2}{2}\int_0^\alpha \frac{\d t}{g(t)^2}+4\varepsilon\right]\\
&&+\ell_{k-1}^{1/3} \left[g_2(0)-g_2(1) +\int_0^1 \frac{\d t}{g(t)^2}+\varepsilon\right]. \nonumber
\end{eqnarray*}

We combine this with equations (\ref{cond_survie}), (\ref{pz}) and (\ref{mom_deux_sum_B_k,j}) and choose $\varepsilon$ small enough to obtain:
\begin{equation}\label{impl_G_lambda}
\max_{0\le \alpha \le 1} G_\lambda(\alpha)<0 \Rightarrow \p\left(\exists u \in \lod, \forall i\ge 1, V(u_i)\le a i^{1/3} \right)>0
\end{equation}
where
\begin{equation*}
G_\lambda(\alpha):= g_2(0) -g_2(\alpha) + g(\alpha)+\frac{\pi^2 \sigma^2}{2}\int_0^\alpha \frac{\d t}{g(t)^2}+\ee^{-\lambda/3} \left[g_2(0)-g_2(1) +\frac{\pi^2 \sigma^2}{2} \int_0^1 \frac{\d t}{g(t)^2}\right].
\end{equation*}

\subsection{Choice of $g$ and $\lambda$ for the lower bound}

We denote
\begin{equation*}
\forall t\in [0,1], f(t):=\left(t+\frac{1}{\ee^\lambda -1}\right)^{1/3}.
\end{equation*}
We have $g_2=a f$. We choose for the width of the pipe the function $g:=b f$.
Then
\begin{equation*}
G_\lambda(\alpha)= a f(0) + (b-a) f(\alpha) +\frac{\pi^2 \sigma^2}{2 b^2}\int_0^\alpha \frac{\d t}{f(t)^2}+\ee^{-\lambda/3} \left[a f(0)-a f(1) +\frac{\pi^2 \sigma^2}{2 b^2} \int_0^1 \frac{\d t}{ f(t)^2}\right].
\end{equation*}
Since $f(1)=\ee^{\lambda/3}f(0)$ and $f'=\frac{1}{3}f^{-2}$, this becomes:
\begin{equation*}
G_\lambda(\alpha) = \left(b+\frac{3\pi^2 \sigma^2}{2 b^2}-a\right) f(\alpha)+\ee^{-\lambda/3} \left[a f(0)-\frac{3\pi^2 \sigma^2}{2 b^2} f(0)\right].
\end{equation*}
Assuming $a>a_c$, we can choose $b$ such that $b+\frac{3\pi^2 \sigma^2}{2 b^2}<a$. Since $f$ is increasing on $[0,1]$,
\begin{equation*}
\max_{0\le \alpha \le 1} G_\lambda(\alpha) = G_\lambda(0)=f(0) \left[\left(b+\frac{3\pi^2 \sigma^2}{2 b^2}-a\right) +\ee^{-\lambda/3} \left(a -\frac{3\pi^2 \sigma^2}{2 b^2} \right)\right].
\end{equation*}
This value is negative for sufficiently large $\lambda$ (that we can choose such that we also have $\ee^{\lambda}\in \bN$), which, in view of (\ref{impl_G_lambda}), completes the proof.

\section{The extinction rate}\label{section_extinction_rate}

Throughout this section, we assume $a<a_c$.

\subsection{Removing the condition $n\in N\bN$}\label{suppress_restriction}
In Section \ref{section_upper_bound}, we assumed that $n$ was always a multiple of $N$.
There were two reasons for taking such a restriction over the values of $N$ :
The first one is that it avoided heavier notation with integer parts. The second one is that it allowed to obtain equation (\ref{r_j_deux}) faster.

This restriction does not matter if we only want to prove Theorem \ref{result}, but in order to prove rigorously Proposition \ref{extinction_rate}, we have to remove it.
This can be achieved at the cost of some extra $\varepsilon$ in equation (\ref{r_j_deux}).

Let $\alpha\in (0,1]$, $\varepsilon>0$ and, for any $n\in\bN$, $j=j(n):=\lfloor \alpha n \rfloor +1$.
Let $g$ and $\widetilde{g}$ be some continuous functions $[0,1]\rightarrow [0,+\infty)$.
We define, for any $t\in[0,1]$, $\widetilde{g}_\alpha(t):=\alpha^{-1/3}\widetilde{g}(\alpha t)$.
We can now apply Corollary \ref{mse_ub} so that for $n$ large enough, we have
\begin{equation*}
\frac{1}{(j-1)^{1/3}} \log \p  \left(\forall i<j, a i^{1/3}-(j-1)^{1/3}\widetilde{g}_\alpha(\frac{i}{j-1})\le S_i\le a i^{1/3} \right) \le -\frac{\pi^2 \sigma^2}{2}\int_0^1\frac{\d t}{\widetilde{g}_\alpha (t)^2}+\varepsilon. \nonumber
\end{equation*}
Using the definition of $\widetilde{g}_\alpha$, the left-hand side of this inequality becomes:
\begin{equation*}
\widetilde{\lhs}:=\frac{1}{(j-1)^{1/3}} \log \p  \left(\forall i<j, a i^{1/3}-\left(\frac{j-1}{\alpha}\right)^{1/3}\widetilde{g}(\frac{\alpha i}{j-1})\le S_i\le a i^{1/3} \right);
\end{equation*}
and the right-hand side:
\begin{equation*}
\widetilde{\rhs}:=-\frac{\pi^2 \sigma^2}{2\alpha^{1/3}}\int_0^\alpha\frac{\d u}{\widetilde{g}(u)^2}+\varepsilon.
\end{equation*}
What we want is to obtain an inequality of the form $\lhs\le \rhs+\varepsilon$, where
\begin{equation*}
\lhs:=\frac{1}{(j-1)^{1/3}} \log \p  \left(\forall i<j, a i^{1/3}-n^{1/3}g(\frac{i}{n})\le S_i\le a i^{1/3} \right);
\end{equation*}
and\begin{equation*}
\rhs:=-\frac{\pi^2 \sigma^2}{2\alpha^{1/3}}\int_0^\alpha\frac{\d u}{\widetilde{g}(u)^2}+\varepsilon.
\end{equation*}
It is sufficient to have $\lhs\le \widetilde{\lhs}$ and $\widetilde{\rhs}\le \rhs$.
The first inequality holds as soon as we have
\begin{equation*}
\forall i<j, n^{1/3}g(\frac{i}{n}) \le \left(\frac{j-1}{\alpha}\right)^{1/3}\widetilde{g}(\frac{\alpha i}{j-1}).
\end{equation*}
Since $0\le \frac{\alpha i}{j-1}-\frac{i}{n} \le\frac{1}{\alpha n}$ and $\frac{j-1}{\alpha n}\ge 1-\frac{1}{\alpha n}$, we set:
\begin{equation*}
\forall t\in[0,1], \widetilde{g}(t):=(1-\eta)^{-1/3} \max_{\max(t-\eta,0)\le u \le t} g(u),
\end{equation*}
where $\eta>0$ is arbitrary small.
This works when $n\ge \frac{1}{\eta \alpha}$.
Since this is to be applied with $\alpha\in\{\frac{1}{N}, \frac{2}{N}, \dots, \frac{N-1}{N},1\}$ for some (large) fixed $N$, we have just proved that $\lhs\le \widetilde{\lhs}$ holds when $n$ is large enough.

According to our choice of $\widetilde{g}$, $\widetilde{\rhs}\le \rhs$ holds for $\eta$ close enough to zero.

Consequently inequality (\ref{r_j_deux}) (with one extra $\varepsilon$) still holds without the condition $n\in N\bN$, and it is easy to verify that all the arguments and results of Section \ref{section_upper_bound} are still valid.

\subsection{Upper bound}
It follows from the computations of Section \ref{section_upper_bound} that, for any continuous function $g :[0,1]\mapsto [0,+\infty)$ such that $g(0)=1$,
\begin{equation*}
\limsup_n \frac{1}{n^{1/3}}\log \p\left(\exists u \in \cT_n, \forall i\le n, V(u_i)\le a i^{1/3} \right)\le -c_g,
\end{equation*}
where
\begin{equation*}
c_g:=\min_{0\le t\le1}\left( g(t)+\frac{\pi^2 \sigma^2}{2}\int_0^t\frac{\d u}{g(u)^2}-a t^{1/3}\right).
\end{equation*}

The best choice for $g$ is the one described in the end of Section \ref{section_upper_bound}: it is the solution of the integral equation (\ref{equ_int}) with $\st=c_g$ such that $g(1)=0$ (or equivalently, $t_{max}=1$). We can make this choice thanks to Proposition \ref{inv_prop} and Proposition \ref{study_equ_diff}(iii).

\subsection{Lower bound}

We directly apply the Paley-Zygmund inequality to the number $W_n$ of individuals $u\in\cT_n$ such that.
\begin{equation*}\forall i\le n, a i^{1/3}-n^{1/3}g(\frac{i}{n})\ge V(u) \le a i^{1/3}.\end{equation*}

Following the computations of Section $\ref{section_survivors}$, we obtain

\begin{equation*}
\liminf_n \frac{1}{n^{1/3}}\log \p\left(\exists u \in \cT_n, \forall i\le n, V(u_i)\le a i^{1/3} \right)\ge\liminf_n \frac{1}{n^{1/3}}\log\left( \p (W_n\ge 1)\ge\right)\ge -d_g,
\end{equation*}
where
\begin{equation*}
d_g:=\max_{0\le t\le1}\left( g(t)+\frac{\pi^2 \sigma^2}{2}\int_0^t\frac{\d u}{g(u)^2}-a t^{1/3}\right).
\end{equation*}

The optimal $g$ would be exactly the same as in the upper bound, except that we are forced to take approximations because $g$ must be strictly positive on $[0,1]$.
Since this optimal $g$ is such that $g(t)+\frac{\pi^2 \sigma^2}{2}\int_0^t\frac{\d u}{g(u)^2}-a t^{1/3}$ does not depend on $t$, we have proved
\begin{equation*}
c:=\sup_g c_g=\inf_g d_g.
\end{equation*}
This completes the proof of Proposition \ref{extinction_rate}.

\appendix
\section{Extension of the results to the non critical case}\label{app_reduc}

\subsection{When the reduction to the critical case is possible}
We assume $\Phi(0)<+\infty$ and $\zeta:=\sup\{t:\Phi(t)<+\infty\}>0$.
We define, for $0<t<\zeta$,
\begin{equation*}F(t):=\frac{\Psi(t)}{t}\end{equation*}
\begin{lemma}\label{F}
If equation (\ref{hyp_red}) does not hold for any $t^*$, then $\forall 0<t<\zeta, t\Psi'(t)<\Psi(t)$ and $F$ is decreasing and convex.
\end{lemma}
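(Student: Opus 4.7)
The plan is to introduce the auxiliary function $h(t) := \Psi(t) - t\Psi'(t)$, in terms of which the hypothesis of the lemma reads: $h$ never vanishes on $(0,\zeta)$, and the first assertion becomes $h(t) > 0$.

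The main observation is that $h'(t) = -t\Psi''(t)$, which is strictly negative for $t > 0$ by strict convexity of $\Psi$; hence $h$ is strictly decreasing on $(0,\zeta)$. I then plan to identify its right limit at $0$ by sandwiching $h$ between the two tangent-line inequalities produced by convexity: setting $s = 0$ in $\Psi(s) \geq \Psi(t) + (s-t)\Psi'(t)$ gives $h(t) \leq \Psi(0)$, while setting $s = 2t$ gives $h(t) \geq 2\Psi(t) - \Psi(2t)$. Right-continuity of $\Psi$ at $0$, which follows from dominated convergence (any $\ee^{-\zeta' z}$ with $\zeta' \in (0,\zeta)$ dominates $\ee^{-tz}$ on the negative tail), makes both bounds tend to $\Psi(0) = \log \Phi(0)$.

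Granting the (implicit but necessary) supercriticality assumption $\Phi(0) > 1$---without it the underlying branching process dies and the problem is trivial---one has $\Psi(0) > 0$. Since $h$ is continuous and strictly decreasing, starts above $0$ at $0^+$, and by hypothesis never vanishes on $(0,\zeta)$, the intermediate value theorem forces $h > 0$ throughout $(0,\zeta)$, which is precisely the first assertion $t\Psi'(t) < \Psi(t)$.

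The two remaining claims about $F(t) = \Psi(t)/t$ then reduce to direct computation. One has
\[
F'(t) = \frac{t\Psi'(t) - \Psi(t)}{t^2} = -\frac{h(t)}{t^2} < 0,
\]
so $F$ is strictly decreasing; differentiating once more and using $h'(t) = -t\Psi''(t)$ yields
\[
F''(t) = \frac{2h(t) - t\, h'(t)}{t^3} = \frac{2h(t) + t^2 \Psi''(t)}{t^3},
\]
a sum of two strictly positive terms, hence $F$ is strictly convex. The only genuinely subtle step is the identification $h(0^+) = \Psi(0)$; once $h > 0$ has been established, everything else is mechanical.
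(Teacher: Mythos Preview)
Your proof is correct and follows essentially the same route as the paper's: show that $t\Psi'(t)-\Psi(t)$ is negative near $0$, invoke the hypothesis that it never vanishes, and conclude by continuity; then compute $F'$ and $F''$. Your treatment of the limit at $0^+$ via the two convexity inequalities is more careful than the paper's one-line appeal to ``either $\Psi'(0)\in\r$ or $\Psi'(0)=-\infty$'', and you are right to flag the implicit supercriticality assumption $\Phi(0)>1$: without it the limit $h(0^+)=\Psi(0)$ is not positive and the stated conclusion $t\Psi'(t)<\Psi(t)$ actually fails. One minor remark: the strict monotonicity of $h$ is not needed for the sign argument (continuity plus non-vanishing plus positivity near $0$ already suffice via the intermediate value theorem), though you do use $h'=-t\Psi''$ again in the computation of $F''$, so nothing is wasted.
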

\begin{proof}
 For $t>0$ very small, $t\Psi'(t)-\Psi(t)<0$, since by convexity of $\Psi$, either $\Psi'(0)\in \r$ or $\Psi'(0)=-\infty$.
$\forall t>0$, $t\Psi'(t)-\Psi(t)\neq0$ by hypothesis, and so by continuity, it is always negative.
As a consequence, for any $0<t<\zeta$,
\begin{eqnarray*}F'(t)&=&\frac{t\Psi'(t)-\Psi(t)}{t^2}<0;\\
F''(t)&=&\frac{t^2\Psi''(t)+2(\Psi(t)-t\Psi'(t))}{t^3}>0;\end{eqnarray*}
which ends the proof of the lemma.
\end{proof}
We are now ready to determine whether a branching random walk can be applied the reduction of Section \ref{subsec_red}.
It easy to construct examples for any of the cases studied below.

\subsubsection{Case $\zeta<+\infty$}

a) If $\zeta<+\infty$ and $\Phi(\zeta)=+\infty$, then, by Fatou's lemma, $\lim_{t\rightarrow \zeta}\Phi(t)=+\infty$.

b) If $\zeta<+\infty$, $\Phi(\zeta)<+\infty$ and $\Phi'(\zeta)=+\infty$.

In these two cases, it is easy to deduce from the lemma that we can find some $t^*\in (0,\zeta)$ such that equation (\ref{hyp_red}) holds.

When $\zeta<+\infty$, $\Phi(\zeta)<+\infty$ and $\Phi'(\zeta)<+\infty$, it depends on the sign of $\zeta\Psi'(\zeta)-\Psi(\zeta)$:

c) If $\zeta\Psi'(\zeta)-\Psi(\zeta)>0$, then by continuity we can find some $t^*\in (0,\zeta)$ such that equation (\ref{hyp_red}) holds.

d) If $\zeta\Psi'(\zeta)-\Psi(\zeta)<0$, then it is impossible to find such a $t^*$ (because $t\mapsto t\Psi'(t)-\Psi(t)$ is increasing) and consequently the reduction to the critical case does not work.

e) If $\zeta\Psi'(\zeta)-\Psi(\zeta)=0$, then $t^*=\zeta$ works, but we still have to check whether $\widetilde{\sigma}^2$ is finite or not.

It is easy to construct examples fitting any of these five cases.

\subsubsection{Case $\zeta=+\infty$}

In this case, we can be much more precise and tell whether we can find some $t^*\in (0,\zeta)$ such that equation (\ref{hyp_red}) holds directly from the intensity measure $\mu$ of the point process $\{\xi_u,|u|=1\}$

Define $x_{min}=\inf\{x\in \r, \mu( (-\infty,x))>0\}$ the minimum of the support of $\mu$ (and $-\infty$ if $\mu$ is not lower bounded).
It is clear that $\lim_{t\rightarrow +\infty}F(t)=-x_{min}$.
If $x_{min}>-\infty$ we will consider $\mu(\{x_{min}\})$ the mass of the eventual atom of $\mu$ in $x_{min}$.
We can now state :
\begin{proposition}
There is some $t^*\in (0,\zeta)$ such that equation (\ref{hyp_red}) holds if and only if $x_{min}>-\infty$ or $\mu(\{x_{min}\})<1$.
\end{proposition}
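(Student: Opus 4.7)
The plan is to rephrase \eqref{hyp_red} as the root equation $H(t^*)=0$ for $H(t):=t\Psi'(t)-\Psi(t)$, and to exploit the facts $H'(t)=t\Psi''(t)>0$ (strict convexity of $\Psi$) and $H(0^+)\le 0$, both recorded in the proof of Lemma~\ref{F}. Thus $H$ is strictly increasing on $(0,\zeta)=(0,+\infty)$, and a positive solution $t^*$ exists if and only if $\lim_{t\to+\infty}H(t)>0$; equivalently, via Lemma~\ref{F}, if and only if $F:=\Psi/t$ fails to be monotone decreasing on $(0,+\infty)$.

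Next I would compute $\lim_{t\to+\infty}F(t)$. The two-sided bound
\[
  \mu((-\infty,-M])\,\ee^{tM}\le \Phi(t)\le \mu(\r)\,\ee^{-tx_{\min}}
\]
(the upper bound requiring $x_{\min}>-\infty$, the lower bound valid for any $M$ with $\mu((-\infty,-M])>0$) gives $\lim_{t\to+\infty}F(t)=-x_{\min}$, understood as $+\infty$ when $x_{\min}=-\infty$. In particular, if $x_{\min}=-\infty$ then $F(t)\to+\infty$, which cannot occur for a decreasing $F$, so the preceding criterion yields existence of $t^*$ in that case.

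When $x_{\min}>-\infty$, I would shift the support to $[0,+\infty)$ by introducing the finite measure $\nu(A):=\mu(A+x_{\min})$, with $\nu(\{0\})=p:=\mu(\{x_{\min}\})$, and the Laplace transform $G(t):=\int \ee^{-ty}\,\d\nu(y)=\ee^{tx_{\min}}\Phi(t)$. Monotone convergence yields $G(t)\downarrow p$ as $t\to+\infty$, while direct substitution rewrites $H(t)=tG'(t)/G(t)-\log G(t)$. If $p<1$: a strictly decreasing $F$ would force $F(t)>-x_{\min}$, i.e.\ $G(t)>1$, for every $t>0$, contradicting $G(t)\downarrow p<1$; hence $t^*$ exists. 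If $p\ge 1$: then $\log G(t)\ge \log p\ge 0$ while $tG'(t)/G(t)\le 0$, so $H(t)\le 0$ everywhere, with strict inequality as soon as $\nu$ has mass outside $\{0\}$; hence $H$ has no root and no $t^*$ exists. The only delicate point I foresee is the borderline degenerate case $\mu=\delta_{x_{\min}}$ (no branching, $\Psi$ linear, $H\equiv 0$), which is ruled out by the strict convexity of $\Psi$; apart from that, the argument is a clean three-way split driven by the two asymptotics $F(t)\to -x_{\min}$ and $G(t)\downarrow p$.
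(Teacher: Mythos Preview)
Your proof is correct and follows essentially the same route as the paper's: both hinge on Lemma~\ref{F} (equivalently, on the sign of $H(t)=t\Psi'(t)-\Psi(t)$), both compute $\lim_{t\to\infty}\Psi(t)/t=-x_{\min}$, and both handle the case $\mu(\{x_{\min}\})\ge 1$ by showing $H\le 0$ directly. Your packaging is a bit cleaner---you merge the paper's two subcases $p=0$ and $0<p<1$ into the single observation $G(t)=\ee^{tx_{\min}}\Phi(t)\downarrow p<1$, and your identity $H=tG'/G-\log G$ in the $p\ge 1$ case sidesteps the paper's separate computation of $\lim_{t\to\infty}\Psi'(t)$---but the underlying ideas coincide.
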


\begin{proof}
We distinguish four cases:

a) If $x_{min}=-\infty$, then $\lim_{+\infty} F=-x_{min}=+\infty$. Consequently $F$ can not be decreasing.

b) If $x_{min}>-\infty$ and $\mu(\{x_{min}\})=0$. We still have $\lim_{+\infty} F=-x_{min}$.
Almost surely, for all $u\in \cT_1$, $\xi_u<x_{min}$, hence $\sum_{|u|=1}\ee^{t(x_{min}-\xi_u)}\rightarrow 0$ as $t\rightarrow +\infty$. By the monotone convergence theorem $\lim_{t\rightarrow +\infty}\e\left[\sum_{|u|=1}\ee^{t(x_{min}-\xi_u)}\right]= 0$.
This implies that for $t$ large enough,
\begin{equation*}
F(t)=-x_{min}+\frac{1}{t}\log \left(\e\left[\sum_{|u|=1}\ee^{t(x_{min}-\xi_u)}\right]\right)<-x_{min}.
\end{equation*}

c) If $x_{min}>-\infty$ and $0<\mu(\{x_{min}\})<1$, we can write
\begin{equation*}
F(t)=-x_{min}+\frac{1}{t}\log \left(\mu(\{x_{min}\})+\varepsilon(t)\right)
\end{equation*}
where $\varepsilon(t):=\e\left[\sum_{|u|=1}\ind_{\xi_u>x_{min}} \ee^{t(x_{min}-\xi_u)}\right]$ decreases to $0$ as $t$ increases to infinity.
Like in case b), for $t$ large enough, $\log \left(\mu(\{x_{min}\})+\varepsilon(t)\right)<0$ and $F(t)<-x_{min}$.

In these three cases, we have proved that $F$ is not decreasing, and we conclude thanks to Lemma \ref{F}: there is some $t^*\in (0,\zeta)$ such that equation (\ref{hyp_red}) hold, since otherwise the lemma would imply that $F$ is decreasing, which is false.

d) When $x_{min}>-\infty$ and $\mu(\{x_{min}\})\ge 1$, we still have
\begin{equation*}
\Psi(t)=- t x_{min}+\log \left(\mu(\{x_{min}\})+\varepsilon(t)\right)\ge-x_{min},
\end{equation*}
where $\varepsilon(t):=\e\left[\sum_{|u|=1}\ind_{\xi_u>x_{min}} \ee^{t(x_{min}-\xi_u)}\right]$ decreases to $0$ as $t$ increases to infinity.
By convexity, $\Psi'$ increases, and so converges to $-x_{min}$ as $t$ goes to $+\infty$.

Finally, for any $t>0$, we proved that  $\Psi'(t)<-x_{min}$ whereas $\Psi(t)\ge-t x_{min}$, hence $t\Psi'(t)<\Psi(t)$.
\end{proof}

\nocite{*}
\bibliographystyle{plain}
\bibliography{biblioBRW}

\small{Bruno Jaffuel

LPMA

Universit\'e Paris VI

4 Place Jussieu

F-75252 Paris Cedex 05

France

bruno.jaffuel@upmc.fr}

\end{document}